\theoremstyle{plain}
\newtheorem{theorem}{Theorem}[section]
\newtheorem{lemma}[theorem]{Lemma}
\newtheorem{proposition}[theorem]{Proposition}
\theoremstyle{definition}
\theoremstyle{remark}
\newtheorem{remark}[theorem]{Remark}
\title{Power savings for counting solutions to polynomial-factorial equations}
\author{Hung M. Bui, Kyle Pratt and Alexandru Zaharescu}
\address{Department of Mathematics, University of Manchester, Manchester M13 9PL, UK}
\email{hung.bui@manchester.ac.uk}
\address{All Souls College, Oxford OX1 4AL, UK}
\email{kyle.pratt@all-souls.ox.ac.uk}
\address{Department of Mathematics, University of Illinois at Urbana-Champaign, 1409 West Green Street, Urbana, IL 61801, USA
and Simion Stoilow Institute of Mathematics of the Romanian Academy, P.O. Box 1-764, RO-014700 Bucharest,
Romania}
\email{zaharesc@illinois.edu}
\subjclass[2010]{11D45}
\keywords{polynomial-factorial equation, simultaneous rational approximation, Pad\'e approximation}
\begin{document}
\date{}

\maketitle

\begin{abstract}
Let $P$ be a polynomial with integer coefficients and degree at least two. We prove an upper bound on the number of integer solutions $n\leq N$ to $n! = P(x)$ which yields a power saving over the trivial bound. In particular, this applies to a century-old problem of Brocard and Ramanujan. The previous best result was that the number of solutions is $o(N)$. The proof uses techniques of Diophantine and Pad\'e approximation.
\end{abstract}

\section{Introduction}

Henri Brocard posed, first in 1876\footnote{``Pour quelles valeurs du nombre entier $x$ l'expression $1.2.3.4\ldots x + 1$ est-elle un carr\'e parfait?''} \cite{Bro1876} and again in 1885\footnote{``Pour quelles valeurs de $x$ l'expression $1.2.3.4\ldots x + 1$ est-elle un carr\'e parfait?''} \cite{Bro1885}, the problem of finding all integer solutions to the equation
\begin{align}\label{eq:Brocard Ramanujan equation}
n! + 1 = x^2.
\end{align}
In 1913\footnote{``The number $1+n!$ is a perfect square for the values $4,5,7$ of $n$. Find other values.''} Ramanujan \cite{Ram1913} (see also \cite{BCK2001}) independently proposed the same problem. Computer calculations show that any solution $n$ other than the known solutions $n=4,5,7$ must be large \cite{BG2000}, and it is conjectured that no other solutions exist. While not offering a complete resolution of the problem, a weakened version of ABC conjecture (the ``weak form of Szpiro's conjecture'') implies that \eqref{eq:Brocard Ramanujan equation} has only finitely many solutions \cite{Over1993}.

More generally, one may inquire about solutions to the polynomial-factorial equation
\begin{align}\label{eq:polynomial factorial equation}
n! = P(x),
\end{align}
where $P$ is an integer polynomial of degree at least two. The ABC conjecture again implies there are only finitely many solutions (see \cite{Luca2002, Dab1996}), and the question is what can be said about \eqref{eq:polynomial factorial equation} unconditionally. For some polynomials one can prove that \eqref{eq:polynomial factorial equation} in fact has only finitely many solutions. For instance, $n!$ is never a perfect power for $n>1$, so the equation $n!=x^d$ has only one solution for any $d\geq 2$. Less trivially, it is known that $n! = x^d-1$ has no solutions if $d\geq 3$ \cite{EO1937, PS1973}. Berend and Harmse \cite{BH2006} made a general study of polynomial-factorial equations and proved several sufficient conditions for \eqref{eq:polynomial factorial equation} to have finitely many solutions. Takeda \cite{Tak2021} recently showed finiteness results for polynomials $P$ related to norm forms of algebraic number fields, and for $n$ restricted to certain infinite subsets of the integers. It is also possible to generalize \eqref{eq:polynomial factorial equation} even further and prove finiteness results for solutions to $f(P(x)) = n!$, where $f$ is an arithmetic function (see e.g. \cite{Sau2020}).

For a polynomial $P$ for which it is unknown at present whether \eqref{eq:polynomial factorial equation} has finitely many solutions, such as in the case of the Brocard-Ramanujan problem, one can at least ask for an upper bound on the number of solutions $n\leq N$ as $N \rightarrow \infty$. (Bounds for such exceptional sets have been proved in somewhat analogous situations e.g. \cite{Luca2007, LSS2014}.) Berend and Osgood \cite{BO1992} showed that \eqref{eq:polynomial factorial equation} has $o(N)$ solutions $n\leq N$, which answered a question of Erd\H{o}s. We improve on this result of Berend and Osgood, obtaining a power saving bound for the number of solutions of a polynomial-factorial equation.

\begin{theorem}[Power saving for number of solutions]\label{thm:main thm}
Let $P\in \mathbb{Z}[x]$ be a polynomial of degree $r\geq 2$, and let $s \in \mathbb{Z}\backslash \{0\}$ be fixed. Then there exists a positive constant $C=C(P,s)$, depending only on $P$ and $s$, such that for all positive integers $N$ we have
\begin{align*}
\#\{N \leq n < 2N : s \cdot n! = P(x) \text{ for some } x \in \mathbb{Z}\} \leq C N^{33/34}.
\end{align*}
\end{theorem}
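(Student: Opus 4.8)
The plan is to argue by contradiction. Suppose the set $\mathcal{S} := \{n\in[N,2N) : s\cdot n! = P(x)\text{ for some }x\in\mathbb{Z}\}$ satisfies $|\mathcal{S}| > CN^{33/34}$ for a large constant $C$. The consecutive gaps between elements of $\mathcal{S}$ are positive integers of total mass less than $N$, so a two-stage pigeonhole argument --- first localising to a window of length $W$, then pigeonholing the gaps inside it --- produces a bounded-size family of solutions lying in an interval of length $W$ and equally spaced with a common gap $h$, where $h$ and $W$ are forced to be of polynomial size in $N$ with a small exponent (morally $h,W\ll N^{1/34}$) once $|\mathcal{S}|$ is as large as assumed. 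The numerical exponent $33/34$ will emerge from optimising $W$, $h$, the size of the family and the strength of the Diophantine estimate below.

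\textbf{From solutions to rational approximation.} Let $g$ denote the Laurent--Puiseux inverse of $P$ at infinity, $g(t)=a_r^{-1/r}t^{1/r}+\gamma_1+\gamma_2 t^{-1/r}+\cdots$, the $\gamma_j$ being algebraic numbers lying in $\mathbb{Q}(a_r^{1/r})$; every solution satisfies $x=g(s\cdot n!)$ on the appropriate branch and for the appropriate sign. After a linear change of variable normalising away $\gamma_1$, two solutions $n_1<n_2$ give $P(x_2)=R\,P(x_1)$ with $R=n_2!/n_1!\in\mathbb{Z}$, and expanding $g$ shows that $x_2/x_1$ approximates $\sqrt[r]{R}$ to within roughly $\sqrt[r]{R}\cdot x_1^{-2}$ --- a poor approximation in absolute terms, since $\sqrt[r]{R}$ is large while $x_1\asymp(n_1!)^{1/r}$ is enormous. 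To obtain a genuinely good approximation I would instead combine several of the clustered solutions of the first step: taking pairs with the common gap $h$ and base points within $W$ of one another produces a rational $p/q$ (with $q$ still of factorial size) approximating $\sqrt[r]{A/B}$, where $A$ and $B$ are products of runs of $h$ consecutive integers near $N$ whose bases differ by at most $W$, so that $A/B$ lies within $O(hW/N)$ of $1$.

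\textbf{The Diophantine contradiction.} The contradiction then comes from an \emph{effective} lower bound of the shape $|\sqrt[r]{A/B}-p/q|>c(A,B)\,q^{-\lambda}$, valid for all $p/q$, with exponent $\lambda<2$ and with $c(A,B)$ depending on $A$ and $B$ only polynomially. This is exactly what the hypergeometric / Pad\'e method of Baker, Chudnovsky, Rickert and Bennett provides: one constructs polynomial approximations to the system $(1-z)^{j/r}$, $1\le j<r$, and specialises $z=A/B-1$, which the clustering has made tiny, forcing $\lambda$ below $2$; the smallness of $c(A,B)$ is harmless because the denominator $q$ produced above is so large. Inserting that approximation violates the lower bound once $N$ is large enough, which is the desired contradiction. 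I expect this to be the crux: engineering the cluster so that the manufactured $p/q$ is precise enough to overpower both the size of $\sqrt[r]{A/B}$ and the polynomial loss in $A,B$ is delicate, and is precisely why one needs the quantitative clustering of the first step rather than a single pair of solutions; this balancing is what pins down the admissible exponent. A more routine technical burden is tracking the heights of the algebraic Puiseux coefficients $\gamma_j$ through the second step.

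\textbf{Degenerate cases.} Finally, the argument must exclude the case in which $\sqrt[r]{A/B}$ is rational, equivalently that a product or quotient of runs of consecutive integers near $N$ is a perfect $r$-th power; results of Erd\H{o}s--Selfridge type show such coincidences essentially never occur, so this case is disposed of directly.
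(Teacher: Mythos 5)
Your sketch shares the paper's high-level philosophy (cluster solutions, translate to rational approximation, conclude via Pad\'e/hypergeometric effective bounds), but the central Diophantine step is set up in a way that cannot work, and the flaw is structural rather than cosmetic.

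The paper uses a \emph{simultaneous} rational approximation: from three solutions $n''<n'<n$ in the same residue class modulo $r$, one gets two algebraic numbers $\omega_1(1/n), \omega_2(1/n)$ approximated to quality $2-o(1)$ by rationals $p_1/x$, $p_2/x$ with the \emph{same} denominator $x\asymp (n!)^{1/r}$ (Lemma \ref{lem:solutions imply simul approx}). For simultaneous approximation of two numbers, the Schmidt/subspace threshold is exponent $3/2$, so quality $2-o(1)$ is comfortably on the contradiction side, and the whole Pad\'e machinery is aimed at making that contradiction effective. Your proposal instead manufactures a single rational $p/q\approx\sqrt[r]{A/B}$. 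Even in the most optimistic reading this is a single approximation to a single algebraic number of degree dividing $r\ge2$, where the Roth threshold is $2$; a quality-$(2-o(1))$ approximation to one such number is not contradictory even ineffectively, so there is no $\lambda<2$ lower bound to hope for without exploiting additional structure that isn't present in your setup. Worse, the particular $p/q=\frac{x_2x_3}{x_1x_4}$ you describe, obtained by dividing the approximations from two pairs, has denominator $q=x_1x_4\asymp ((N!)^{1/r})^2$, while the error is still only $\ll ((N!)^{1/r})^{-2+o(1)}\cdot N^{-h/r}$, i.e.\ the approximation quality relative to $q$ is only $1+o(1)$, not $2-o(1)$: dividing two good approximations doubles the denominator without squaring the error. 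So the ``crux'' step would not get off the ground. To rescue the argument one would need to preserve a common denominator across two genuinely distinct algebraic targets, which is exactly what the paper's three-in-a-residue-class cluster achieves by taking $\beta_1=n-n'$ and $\beta_2=n-n''$ (both multiples of $r$) and keeping the same large $n$ for both; your equally-spaced cluster throws that structure away.

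Two smaller points. First, the pigeonhole you describe does not actually produce an \emph{equally spaced} family; it produces a bounded number of solutions inside a window, from which one can fix differences, as the paper does, but an arithmetic progression is more than pigeonholing gives. Second, the worry about $\sqrt[r]{A/B}$ accidentally being rational is moot in the paper's setup: the final contradiction (proof of Proposition \ref{prop:not many close solutions} assuming Proposition \ref{prop:existence of good polynomials}) rests on the integer $Z\sum_i p_{i,j}p_i$ being nonzero and hence $\ge 1$ in absolute value, together with the nonvanishing of $\det(p_{i,j})$, and never needs irrationality of the $\omega_i(1/n)$; your appeal to Erd\H{o}s--Selfridge to rule out degenerate cases is an extra crutch that the correct architecture renders unnecessary.
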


In particular, Theorem \ref{thm:main thm} implies that the Brocard-Ramanujan equation \eqref{eq:Brocard Ramanujan equation} has $\ll N^{33/34}$ solutions with $n\leq N$.

\begin{remark}
We assume in the course of proving Theorem \ref{thm:main thm} that $N$ is sufficiently large depending on $P$ and $s$, since for bounded $N$ the statement of Theorem \ref{thm:main thm} follows trivially from adjusting the constant $C$.
\end{remark}

\begin{remark}
There is no harm in supposing in Theorem \ref{thm:main thm} that $s, x$, and the leading coefficient of $P$ are positive. By changing $x \rightarrow -x$ we may assume that $x$ is nonnegative, at the cost of adjusting the constant $C$, and possibly changing the sign of the leading coefficient of $P$ (if $P$ has odd degree). If $s$ and the leading coefficient of $P$ have opposite signs then there are at most finitely many solutions to $sn! = P(x)$. Therefore, we may assume $s$ and the leading coefficient of $P$ have the same sign, and hence assume both are positive upon multiplying through by $-1$ as necessary.
\end{remark}

\begin{remark}
The exponent $\frac{33}{34} = 0.97058\ldots$ is only an approximation to the best exponent obtained via our method, which is $12\sqrt{2}-16 + \epsilon = 0.97056\ldots$ for any small, fixed $\epsilon > 0$, provided $N$ is sufficiently large in terms of $\epsilon$ (see Proposition \ref{prop:not many close solutions} below).
\end{remark}

We give here an outline of the proof of Theorem \ref{thm:main thm}. Some details are simplified compared to the actual proof, so the sketch here should be viewed as illustrative only. We consider all of the solutions $n_1 < n_2 <\cdots < n_R$ to $n_i! = P(x_i)$ with $N \leq n_i < 2N$ and put the solutions in tuples $(n_i,n_{i+1},n_{i+2})$. We introduce a parameter $\mathcal{M}$ which is a small power of $N$. The number of tuples with $n_{i+2} - n_i$ larger than $\mathcal{M}$ are easily bounded, so we must bound the number of tuples with $n_{i+2} - n_i$ less than $\mathcal{M}$.

Following the strategy of \cite{BO1992}, the problem is transformed into one about simultaneous rational approximation of values of algebraic functions. From any tuple of solutions $(n_i,n_{i+1},n_{i+2})$ we construct a simultaneous rational approximation
\begin{align}\label{eq:intro very good simul approx}
\left|\prod_{j=1}^{J_k} \left(1 - \frac{j}{n_i} \right)^{-1/r} - \frac{p_k}{x_i} \right| \ll x_i^{-2+\varepsilon}, \ \ \ \ \ \ \ \ \ \ \ k = 1,2,
\end{align}
where $1\leq J_1 < J_2\leq \mathcal{M}$ are integers, and the $p_k$ are also integers. Write $\omega_1(1/n_i),\omega_2(1/n_i)$ for the algebraic numbers appearing in \eqref{eq:intro very good simul approx}. The denominator $x_i$ has size $\approx (N!)^{1/r}$, so the denominator is very large compared to the ``height'' of $\omega_k(1/n_i)$. Schmidt's subspace theorem (see \cite[Chapter 7]{BG2006}, for instance) says there are only finitely many denominators $q$ such that
\begin{align*}
\left|\omega_k(1/n_i) - \frac{r_k}{q} \right| \leq q^{-\frac{3}{2} - \epsilon}, \ \ \ \ \ \ \ k=1,2,
\end{align*}
and the large height of the rational numbers $p_k/x_i$ leads one to believe that no simultaneous rational approximation as in \eqref{eq:intro very good simul approx} is possible. The subspace theorem is ineffective, so the challenge is to work with effective Diophantine arguments in order to contradict \eqref{eq:intro very good simul approx}. The argument in \cite{BO1992} appeals to effective estimates of Osgood \cite{Osg1986} on lower bounds for linear forms in Siegel $G$-functions. Osgood's estimates, though effective, are not sufficiently explicit to prove a result like Theorem \ref{thm:main thm}, and it would be somewhat difficult to obtain the necessary modifications.

We choose not to frame our results in the language of $G$-functions and instead take a different, though related, approach. Namely, we proceed via the method of Pad\'e approximations (also sometimes called the \emph{hypergeometric method} or the \emph{Thue-Siegel method}, see e.g. \cite{Ben2001, Ben2008, Chu1983, CC1985}). Following a well-known method of Rickert \cite{Ric1993}, we need to construct rational numbers with certain special properties. These rational numbers will arise as the values of Pad\'e polynomials evaluated at a certain point. The Pad\'e polynomials $P_0,P_1,P_2$ of degree $\leq D$ are chosen to have small height (i.e. coefficients of small size), and so that the approximating form
\begin{align*}
R(x) = P_0(x) + P_1(x) \omega_1(x) + P_2(x) \omega_2(x)
\end{align*}
vanishes to high order at $x=0$. Here ``high order'' means $\geq (3-\epsilon_0) D$ for some parameter $\epsilon_0 \in (0,1)$ (the reader may pretend $\epsilon_0\approx \frac{1}{2}$ without losing much). We construct the polynomials via Siegel's lemma. The height of the polynomials increases as $\epsilon_0$ decreases, and we choose $\epsilon_0$ optimally to effect the right balance between high order of vanishing and low-height polynomials.

Actually, the initial Pad\'e polynomials $P_0,P_1,P_2$ are not sufficient for our purposes, since we cannot show they are suitably ``independent.'' Following \cite{Nag1995}, we construct from these initial Pad\'e polynomials two other sequences $P_i^{[k]}$ and $P_i^{\langle k \rangle}$ of Pad\'e polynomials. We can show the polynomials $P_i^{[k]}$ are suitably independent by using some classical arguments of Siegel \cite{Sieg1949}. The desired independence is that, for some $K$ and for $\alpha =1/n$ with $n$ some solution to $n! = P(x)$, the matrix
\begin{align*}
(P_i^{[k]}(\alpha))_{\substack{0 \leq k \leq K \\ 0\leq i \leq 2}}
\end{align*}
has full rank. Here it is important that $1,\omega_1,\omega_2$ are linearly independent over $\mathbb{C}(z)$, the field of rational functions with complex coefficients, and also that $\omega_1$ and $\omega_2$ satisfy relatively simple differential equations. The polynomials $P_i^{[k]}$ and $P_i^{\langle k \rangle}$ are related by a nonsingular transformation, so the polynomials $P_i^{\langle k \rangle}$ are also suitably independent. We switch to the polynomials $P_i^{\langle k \rangle}$ since it is easier to control the size of $P_i^{\langle k \rangle}(\alpha)$.

It is important that the matrix of rational numbers
\begin{align*}
(P_i^{\langle k \rangle}(\alpha))_{\substack{0 \leq k \leq K \\ 0\leq i \leq 2}}
\end{align*}
is not too large, i.e. that $K$ is not too large. The integer $K$ is essentially the order of vanishing of a certain determinant polynomial $\Delta(x)$ at $x=\alpha$. If there are many solutions $n$, then for some $n$ the polynomial $\Delta(x)$ vanishes to low order at $x =\alpha = 1/n$. On the other hand, if there are not many solutions then we are already done.

We have endeavored, for the convenience of the reader, to make the paper as self-contained as possible. Therefore, whenever we have cited a result from the literature we have also supplied a proof.

The outline of the rest of the paper is as follows. Section \ref{sec:notation} lays out the notation of the paper. In Section \ref{sec:reduction to key proposition} we reduce the proof of Theorem \ref{thm:main thm} to the proof of the technical Proposition \ref{prop:existence of good polynomials}. In Section \ref{sec:bin coeffs and initial Pade polys} we begin assembling the tools we need to study Pad\'e approximations, and we construct our initial Pad\'e polynomials. Section \ref{sec:indep Pade polys} contains the arguments showing that the polynomials $P_i^{[k]}$ are suitably independent. In Section \ref{sec:alternate pade polys} we switch to studying the polynomials $P_i^{\langle k \rangle}$, and all the pieces are assembled in Section \ref{sec:proof of key prop} for the proof of Proposition \ref{prop:existence of good polynomials}. In Section \ref{sec:future work} we offer some concluding thoughts about possible extensions and future work.

\section{Notation}\label{sec:notation}

Whenever $N$ appears in the paper, as in the statement of Theorem \ref{thm:main thm} or elsewhere, we always assume that $N$ is sufficiently large. We say $f\ll g$, $g \gg f$, or $f = O(g)$ for a nonnegative function $g$ if there is a positive constant $C$ such that $|f|\leq Cg$. If the constant $C$ depends on other parameters or quantities we generally indicate this with a subscript, e.g. $f\ll_P g$. We write $o(1)$ for a quantity which tends to zero as $N$ tends to infinity, and dependence of this quantity on other objects is sometimes indicated with a subscript.

We write $\mathbb{N},\mathbb{Z}, \mathbb{Q}$, and $\mathbb{C}$ for the set of positive integers, the set of integers, the set of rational numbers, and the set of complex numbers, respectively. Given a ring $R$, we write $R[x]$ for the ring of polynomials with coefficients in $R$ and $R(x)$ for the field of rational functions with coefficients in $R$.

Given rational functions $A,B$ and a differentiable function $f$, we inductively define the differential operators $(A \frac{d}{dx}+B)^j$ acting on $f$ by
\begin{align*}
\Big(A \frac{d}{dx}+B\Big)^0 f &= f(x), \\
\Big(A \frac{d}{dx}+B\Big)^{j+1} f &= A(x) \frac{d}{dx}\left(\Big(A \frac{d}{dx}+B\Big)^j f \right) + B(x) \left(\Big(A \frac{d}{dx}+B\Big)^j f \right).
\end{align*}

If $f(x)$ is holomorphic and not identically zero in a neighborhood of a point $\alpha \in \mathbb{C}$, we write $\text{ord}_{x=\alpha} f(x)$ for the largest nonnegative integer $n$ such that $\lim_{x\rightarrow \alpha} f(x)(x-\alpha)^{-n}$ is finite.

For any prime number $p$, we may write a nonzero rational number as $p^r \frac{a}{b}$, where $a,b,r \in \mathbb{Z}$ ($ab\neq 0$) and $p\nmid ab$. We define the $p$-adic valuation $v_p(p^r \frac{a}{b}) = r$.

Given a real number $y$, the floor $\lfloor y \rfloor$ of $y$ is the greatest integer which is $\leq y$. For a statement $S$, write $\mathbf{1}_S$ for the function which is 1 if $S$ is true and 0 if $S$ is false. We write $\text{deg}(P)$ for the degree of a nonzero polynomial $P$.

We collect here some unique notation and definitions that play a key role in the paper. They are introduced as needed in the paper, and we give here the location of their first appearance. The parameter $\epsilon_0$ is always a fixed real number in the interval $[\frac{1}{100}, \frac{99}{100}]$ (Proposition \ref{prop:existence of good polynomials}, Lemma \ref{lem:initial pade polynomials}). We have $\mathcal{M} = \lfloor N^\theta \rfloor$ for $\theta$ a small, positive number (see \eqref{eq:defn of mathcal M parameter}). The positive integers $\beta_1 < \beta_2\leq \mathcal{M}$ are multiples of the degree $r\geq 2$ (Lemma \ref{prop:not many close solutions}). The functions $\omega_0,\omega_1,\omega_2$ are defined in \eqref{eq:defn of omega functions}. The rational function $A_i(x), i \in \{0,1,2\}$, satisfies $\omega_i'(x) = A_i(x) \omega_i(x)$ (see \eqref{eq:defn of A funcs}). The polynomial $T(x) \in \mathbb{Z}[x]$ (see \eqref{eq:defn of T poly}) is such that $T(x)A_i(x) \in \mathbb{Z}[x]$ for $0\leq i \leq 2$.

\section{Reduction to Proposition \ref{prop:existence of good polynomials}}\label{sec:reduction to key proposition}

In this section we reduce the proof of Theorem \ref{thm:main thm} to the proof of Proposition \ref{prop:existence of good polynomials} below. Along the way we perform intermediate reductions and prove important supporting results. We deduce Theorem \ref{thm:main thm} from a slightly more technical result.

\begin{proposition}[Power saving for depressed polynomials]\label{prop:technical main thm}
Let $P\in \mathbb{Z}[x]$ be a polynomial of degree $r\geq 2$ with the coefficient of $x^{r-1}$ equal to zero. Let $s \in \mathbb{Z}\backslash \{0\}$ be fixed. Then for large $N$
\begin{align*}
\#\{N \leq n < 2N : s \cdot n! = P(x) \text{ for some } x \in \mathbb{N}\} \ll_{P,s} N^{33/34}.
\end{align*}
\end{proposition}

\begin{proof}[Proof of Theorem \ref{thm:main thm} assuming Proposition \ref{prop:technical main thm}]
The reduction is essentially \cite[Lemma 1]{BO1992}. Write
\begin{align*}
P(x) = \sum_{i=0}^r a_i x^i,
\end{align*}
where $a_i \in \mathbb{Z}$ and $a_r \neq 0$. We define $Q(x) = P(x - \frac{a_{r-1}}{ra_r}) \in \mathbb{Q}[x]$. Then $Q(x)$ has degree $r$, and the coefficient of $x^{r-1}$ in $Q$ is equal to zero. Moreover, the denominators of the coefficients of $Q$ are divisors of $(ra_r)^r$. Observe that
\begin{align*}
sn! = P(x) &= Q \left(x + \frac{a_{r-1}}{ra_r} \right) = \sum_{i=0}^r b_i \left(\frac{ra_r x + a_{r-1}}{ra_r} \right)^i
\end{align*}
for some rational numbers $b_i$ with denominators dividing $(ra_r)^r$ and $b_{r-1} = 0$. If we set $t = (ra_r)^{2r}$ then
\begin{align*}
stn! = t Q \left(x + \frac{a_{r-1}}{ra_r} \right) = R(ra_r x + a_{r-1}),
\end{align*}
where
\begin{align*}
R(y) &= \sum_{i=0}^r b_i (ra_r)^r \cdot (ra_r)^{r-i} y^i \in \mathbb{Z}[y],
\end{align*}
and the coefficient of $y^{r-1}$ in $R(y)$ is equal to zero. Note also that $R$ only depends on $P$. It follows that
\begin{align*}
&\#\{N \leq n < 2N : s \cdot n! = P(x) \text{ for some } x\in \mathbb{N}\} \\ 
&\leq \#\{N \leq n < 2N : st \cdot n! = R(y) \text{ for some } y \in \mathbb{N}\},
\end{align*}
and the desired bound then follows from Proposition \ref{prop:technical main thm}.
\end{proof}

We introduce the fundamental parameter
\begin{align}\label{eq:defn of mathcal M parameter}
\mathcal{M} := \lfloor N^\theta \rfloor, \ \ \ \ \ \ \ \ \ \ \frac{1}{1000}\leq  \theta \leq \frac{1}{20},
\end{align}
which serves to control the distance between solutions to $ s n! = P(x)$. Our method, very roughly speaking, is to show that there can only be few solutions $n_1,n_2$ with $sn_i! = P(x_i)$ and $|n_1-n_2| \leq \mathcal{M}$, and therefore most solutions are at distance $>\mathcal{M}$ from one another, hence there are $\lessapprox N\mathcal{M}^{-1}$ total solutions.

\begin{proposition}[Few solutions with small difference]\label{prop:not many close solutions}
Let $P\in \mathbb{Z}[x]$ be a polynomial of degree $r\geq 2$ with the coefficient of $x^{r-1}$ equal to zero. Let $s \in \mathbb{Z}\backslash \{0\}$ be fixed. Let $\epsilon>0$ be sufficiently small, and assume $N$ is sufficiently large in terms of $P,s,\epsilon$.  Let $\mathcal{M}$ be defined as in \eqref{eq:defn of mathcal M parameter}, and assume
\begin{align*}
\theta\leq 17 - 12\sqrt{2} - \epsilon.
\end{align*}
Let $\beta_1,\beta_2$ be positive integer multiples of $r$ with $r\leq \beta_1 < \beta_2 \leq \mathcal{M}$. Let $\mathcal{N} \in [N,2N)$ be an integer. Then
\begin{align*}
\sum_{\substack{\mathcal{N} \leq n < \mathcal{N} + \lfloor \frac{N}{\log N}\rfloor \\ N \leq n < 2N \\ sn! = P(x) \\ s(n-\beta_i)! = P(x_i), i=1,2}} 1 \leq \frac{N}{\mathcal{M}^3 (\log N)}.
\end{align*}
\end{proposition}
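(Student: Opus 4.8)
The plan is to run the standard hypergeometric/Padé-approximation machinery of Rickert \cite{Ric1993}, Nagata \cite{Nag1995}, and Bennett \cite{Ben2001, Ben2008}, in the sharp quantitative form provided by Proposition \ref{prop:existence of good polynomials}. Suppose $n$ is a solution counted on the left-hand side, so that $sn! = P(x)$ for some $x \in \mathbb{N}$ and simultaneously $s(n-\beta_i)! = P(x_i)$ for $i=1,2$. Dividing the equations, I would write
\begin{align*}
\frac{x_i^r}{x^r}\cdot\frac{(\text{leading stuff})}{(\text{leading stuff})} = \frac{(n-\beta_i)!}{n!} = \prod_{j=1}^{\beta_i}\Big(1-\frac{j-1}{n}\Big)^{-1}\cdot(\text{shift}),
\end{align*}
so that, after taking $r$-th roots and expanding, $x_i/x$ is an extremely good rational approximation — to within $O(x^{-2+\varepsilon})$, essentially $O((N!)^{-2/r+\varepsilon})$ — to the algebraic number $\omega_i(1/n)$ built from the binomial-type product $\prod_{j}(1-j/n)^{-1/r}$. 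The key point is that the common denominator $x$ has size roughly $(N!)^{1/r}$, which is astronomically large compared to the height of $\omega_i(1/n)$ (the latter being controlled by $\mathcal{M}$, a tiny power of $N$).

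Next I would invoke Proposition \ref{prop:existence of good polynomials} (for the appropriate choice of the parameter $\epsilon_0 \in [\tfrac1{100},\tfrac{99}{100}]$ and for each $n$ in the sum) to produce explicit Padé polynomials $P_i^{\langle k\rangle}$ whose values at $\alpha = 1/n$ form a matrix of full rank, with controlled height and with the approximating forms $R^{\langle k\rangle}(\alpha) = \sum_i P_i^{\langle k\rangle}(\alpha)\omega_i(\alpha)$ small. From the full-rank property one extracts, by a Rickert-style determinant/linear-algebra argument, a genuinely nonzero integer (or rational of bounded denominator) of the form
\begin{align*}
\det\big(\text{Padé values}\big)\cdot x^{\text{power}} \neq 0,
\end{align*}
which is therefore $\geq 1$ in absolute value (after clearing denominators). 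On the other hand, substituting the super-good approximations $x_i/x \approx \omega_i(1/n)$ into this same determinant shows it is extremely small — bounded by (height of Padé polynomials)$^{O(1)}$ times $(N!)^{-c}$ for some positive $c$ depending on $r$ and on the order of vanishing $(3-\epsilon_0)D$. Comparing the lower bound $\geq 1$ with the upper bound forces a contradiction \emph{unless} the parameter $K$ (the number of derivative-shifts needed, essentially the order of vanishing of the determinant polynomial $\Delta(x)$ at $x=\alpha$) is large for that particular $n$.

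The endgame is then a counting argument on $\Delta$. The polynomial $\Delta(x)$ has bounded degree (controlled by $D$ and $\mathcal{M}$), so it cannot vanish to high order at too many points $\alpha = 1/n$; more precisely $\sum_n \mathrm{ord}_{x=1/n}\Delta(x) \leq \deg \Delta$. Hence the number of $n$ in $[\mathcal{N}, \mathcal{N}+\lfloor N/\log N\rfloor)$ for which $K = K(n)$ is large enough to evade the contradiction is at most $\deg\Delta \big/ (\text{threshold})$, and choosing $D$ as a suitable small power of $N$ and optimizing $\epsilon_0$ (this is where the exponent $17 - 12\sqrt2$ and the constraint $\theta \leq 17-12\sqrt2-\epsilon$ enter, balancing the height growth of the Siegel-lemma polynomials against the order of vanishing) makes this count $\leq N/(\mathcal{M}^3\log N)$. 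The main obstacle, and the place requiring genuine care rather than routine estimation, is the bookkeeping in the contradiction step: one must track simultaneously (i) the height of the $P_i^{\langle k\rangle}$, which blows up like a power of $D$ as $\epsilon_0 \to 0$, (ii) the exact exponent of $x$ coming from clearing denominators in the determinant, and (iii) the quality $x^{-2+\varepsilon}$ of the approximation degraded by the $k$-fold differential operators, and verify that the net exponent of $N!$ is strictly negative for all admissible $n$ with $K(n)$ below threshold — it is precisely the tightness of this inequality that dictates the admissible range of $\theta$ and forces the $12\sqrt2 - 16$ exponent in the final theorem.
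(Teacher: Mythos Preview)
Your approach is the paper's, but you have muddled the division of labor between Proposition~\ref{prop:not many close solutions} and Proposition~\ref{prop:existence of good polynomials}. The latter is invoked \emph{once}, not ``for each $n$ in the sum'': its hypothesis is precisely that the sum exceeds $N/(\mathcal{M}^3\log N)$, and its conclusion already hands you a single $n_0$ together with the full-rank matrix $(p_{i,j})$ satisfying (1)--(4). In particular, the $\Delta$-counting argument you describe as the ``endgame'' is already absorbed into the proof of Proposition~\ref{prop:existence of good polynomials} (Section~\ref{sec:proof of key prop}); you do not redo it here.

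Given that, the actual proof is shorter and cleaner than your sketch: assume the sum is large, apply Proposition~\ref{prop:existence of good polynomials} to get $n_0$ and $(p_{i,j})$, set $\rho = \max_{i=1,2} |\omega_i(1/n_0) - p_i/x|$ via Lemma~\ref{lem:solutions imply simul approx}, and compare the upper bound $\rho \ll ((\mathcal{N}!)^{1/r})^{-2+o(1)}$ against the Rickert-type lower bound. For the lower bound one uses that $\det(p_{i,j})\neq 0$ forces some $\sum_i p_{i,j}p_i \neq 0$, hence $\geq 1/Z \geq (cC^D)^{-1}$, while the triangle inequality and property~(4) give $|\sum_i p_{i,j}p_i| \leq 2xuU^D\rho + xwW^{-D}$; this yields $\rho \gg ((\mathcal{N}!)^{1/r})^{-1-\chi-o(1)}$ with $\chi = \log(CU)/\log(W/C)$. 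The contradiction requires $\chi < 1$, which unwinds to $\theta < \epsilon_0(1-\epsilon_0)/((3-\epsilon_0)(4-\epsilon_0))$, maximized at $\epsilon_0 = 2-\sqrt{2}$ to give $17-12\sqrt{2}$.

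One factual slip: $D$ is not ``a suitable small power of $N$'' but $D \asymp_{P,s} N$ (it is fixed explicitly in Proposition~\ref{prop:existence of good polynomials} as roughly $\log((\mathcal{N}!)^{1/r})/\log(W/C)$); it is $C,U,W$ that are fixed powers of $N$.
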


\begin{proof}[Proof of Proposition \ref{prop:technical main thm} assuming Proposition \ref{prop:not many close solutions}]
Let
\begin{align*}
N \leq n_1 < n_2 < \cdots < n_R < 2N
\end{align*}
be all the solutions to $s n! = P(x)$ with $n_i \in [N,2N)$, so that we wish to prove an upper bound on $R$. We may assume that $R > 10 r$, otherwise the desired upper bound trivially holds. By removing at most $2r$ solutions we may assume that $R$ is a multiple of $2r+1$, and then we put the solutions into $(2r+1)$-tuples
\begin{align*}
(n_i,n_{i+1},\ldots,n_{i+2r}).
\end{align*}
We separate tuples according to whether $n_{i+2r}-n_i$ is greater than $\mathcal{M}$ or less than $\mathcal{M}$. 

It is easy to show there are few solutions belonging to tuples with $n_{i+2r}-n_i > \mathcal{M}$. Indeed, the number of intervals $[n_i,n_{i+2r}]$ contained in $[N,2N)$ with $n_{i+2r}-n_i > \mathcal{M}$ is $\leq N/\mathcal{M}$ (by taking Lebesgue measures, for instance), and these intervals contain
\begin{align*}
&\leq (2r+1) \frac{N}{\mathcal{M}}
\end{align*}
solutions to $sn! = P(x)$.

We turn our attention to the tuples with $n_{i+2r}-n_i \leq \mathcal{M}$. By the pigeonhole principle, among the $2r+1$ elements in any given tuple there are at least three elements $n'' < n' < n$ which lie in the same residue class modulo $r$. We fix the differences $\beta_1 =n-n'$ and $\beta_2 = n-n''$, so that $2\leq r \leq \beta_1 < \beta_2 \leq \mathcal{M}$
with $\beta_1$ and $\beta_2$ multiples of $r$. There are trivially $\leq \mathcal{M}^2$ choices for the pair $(\beta_1,\beta_2)$, and therefore
\begin{align*}
R &\leq 2r + (2r+1) \frac{N}{\mathcal{M}} + (2r+1)\mathcal{M}^2 \max_{\substack{r\leq \beta_1 < \beta_2 \leq \mathcal{M} \\ r\mid \beta_i}} \sum_{\substack{N \leq n < 2N \\ sn! = P(x) \\ s(n-\beta_i)! = P(x_i), i=1,2}} 1.
\end{align*}
We cover $[N,2N)$ by short intervals of the form $[\mathcal{N},\mathcal{N} + \lfloor \frac{N}{\log N}\rfloor)$, which we can do with $\leq\log N+1$ short intervals. (It will be important later that
\begin{align}\label{eq:factorial nearly constant on short interval}
\mathcal{N}! \leq n! \leq (\mathcal{N}!)^{1+o(1)}
\end{align}
for any $n \in [\mathcal{N}, \mathcal{N} + \lfloor \frac{N}{\log N}\rfloor ]$, as can be checked with Stirling's formula.) Therefore
\begin{align*}
R &\ll_P \frac{N}{\mathcal{M}} + \mathcal{M}^2 (\log N) \max_{\substack{r\leq \beta_1 < \beta_2 \leq \mathcal{M} \\ r\mid \beta_i}} \, \max_{N \leq \mathcal{N} < 2N} \sum_{\substack{\mathcal{N} \leq n < \mathcal{N} + \lfloor \frac{N}{\log N}\rfloor \\ N \leq n < 2N \\ sn! = P(x) \\ s(n-\beta_i)! = P(x_i), i=1,2}} 1.
\end{align*}
By Proposition \ref{prop:not many close solutions} the sum over $n$ is $\leq \frac{N}{\mathcal{M}^3 \log N}$, and therefore $R\ll_P \frac{N}{\mathcal{M}}$. This finishes the proof upon taking, say, $\theta = 17-12\sqrt{2}-10^{-10}$.
\end{proof}

We have reduced matters to proving Proposition \ref{prop:not many close solutions}. As a first step, we show that any solution counted in the sum in Proposition \ref{prop:not many close solutions} gives rise to a strong simultaneous rational approximation.

\begin{lemma}[Solutions imply simultaneous rational approximation]\label{lem:solutions imply simul approx}
Let $P \in \mathbb{Z}[x]$ be a polynomial of degree $r\geq 2$ with the coefficient of $x^{r-1}$ equal to zero. Let $\mathcal{M}$ be defined as in \eqref{eq:defn of mathcal M parameter}, and let $\beta_1,\beta_2$ be positive integer multiples of $r$ with $r\leq \beta_1 < \beta_2 \leq \mathcal{M}$. Assume that
\begin{align*}
sn! = P(x), s(n-\beta_1)! = P(x_1), s(n-\beta_2)! = P(x_2)
\end{align*}
with $n \in [N,2N)$ and some positive integers $x,x_1,x_2$. Then for $i=1,2$ there exists $p_i \in \mathbb{Z}$ such that
\begin{align*}
\left| \prod_{j=1}^{\beta_i-1} \left(1 - \frac{j}{n} \right)^{-1/r} - \frac{p_i}{x} \right| \ll_P \frac{1}{x^{2-o(1)}}.
\end{align*}
\end{lemma}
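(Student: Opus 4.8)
The plan is to extract the rational approximation directly from the factorial relations by writing ratios of consecutive factorials. Since $sn! = P(x)$ and $s(n-\beta_i)! = P(x_i)$, dividing gives
\[
\frac{n!}{(n-\beta_i)!} = \frac{P(x)}{P(x_i)} = \prod_{j=0}^{\beta_i-1} (n-j).
\]
Rewriting the left-hand product, $\prod_{j=0}^{\beta_i-1}(n-j) = n^{\beta_i}\prod_{j=1}^{\beta_i-1}(1 - j/n)$, we see that $P(x)/P(x_i)$ equals $n^{\beta_i}$ times a product that is close to $1$. Now I would take $r$-th roots. Because $\beta_i$ is a multiple of $r$, we have $n^{\beta_i/r} \in \mathbb{Z}$, so
\[
\Big(\tfrac{P(x)}{P(x_i)}\Big)^{1/r} = n^{\beta_i/r}\prod_{j=1}^{\beta_i-1}\Big(1 - \tfrac{j}{n}\Big)^{1/r}.
\]
On the other hand, since $P$ has degree $r$ with leading coefficient which we may take positive and (after the reduction of Proposition~\ref{prop:technical main thm}) vanishing $x^{r-1}$-coefficient, $P(x)^{1/r} = c\,x(1 + O_P(x^{-2}))$ for an appropriate constant $c$, and similarly for $P(x_i)$; hence $(P(x)/P(x_i))^{1/r} = (x/x_i)(1 + O_P(x^{-2}) + O_P(x_i^{-2}))$.

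Combining these two expressions for $(P(x)/P(x_i))^{1/r}$ and solving, I would set
\[
p_i := x_i \cdot n^{\beta_i/r} \in \mathbb{Z},
\]
and then the quantity $\prod_{j=1}^{\beta_i-1}(1-j/n)^{-1/r} - p_i/x$ should, after clearing, be expressible as $p_i/x$ times a quantity of size $O_P(x^{-2} + x_i^{-2})$. So the first main step is this algebraic manipulation, and the second is to control the error terms: I need that $p_i/x = (x_i/x)\,n^{\beta_i/r}$ is bounded (indeed $x_i \leq x$ since $(n-\beta_i)! \leq n!$ and $P$ is eventually increasing, while $n^{\beta_i/r} \le x/x_i$ up to constants from the same ratio identity — in fact $p_i/x$ is within a constant factor of $1$), and that $x$ and $x_i$ are both $\gg (N!)^{1/r}$, so that $x^{-2} + x_i^{-2} \ll x^{-2+o(1)}$; here the $o(1)$ absorbs the discrepancy between $x_i$ and $x$, which by \eqref{eq:factorial nearly constant on short interval} and the short-interval hypothesis is at most $x^{o(1)}$ (note $\beta_i \le \mathcal M = N^{o(1)}$ is not needed for this lemma's interval, but the relation $x_i^r \asymp (n-\beta_i)!$ and $x^r \asymp n! \le (\mathcal N!)^{1+o(1)}$ does the job).

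The step I expect to require the most care is making the expansion $P(x)^{1/r} = cx + O_P(x^{-1})$ rigorous and tracking that the resulting error, after being multiplied through by $x$ (to isolate $p_i/x$), is genuinely $\ll_P x^{-2+o(1)}$ rather than merely $\ll_P x^{-1}$: one must use the vanishing of the $x^{r-1}$-coefficient crucially, since that is what pushes the correction from order $x^{-1}$ to order $x^{-2}$ inside the $r$-th root. Once that is in hand, choosing the branch of the $r$-th root consistently (all quantities are positive real by the normalization remarks) and combining the two error contributions gives the claimed bound. Everything else is elementary: the identity for the ratio of factorials, the integrality of $n^{\beta_i/r}$ from $r \mid \beta_i$, and the lower bound $x \gg (N!)^{1/r}$ which is immediate from $x^r \asymp_P n! \ge N!$.
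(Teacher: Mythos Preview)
Your proposal is correct and leads to the same $p_i = n^{\beta_i/r} x_i$ and the same bound, but you arrive there by a somewhat different route than the paper. The paper first bounds the \emph{integer} quantity $\bigl|\tfrac{(n-\beta_i)!}{n!}x^r - x_i^r\bigr| = \tfrac{1}{P(x)}\bigl|P(x_i)x^r - P(x)x_i^r\bigr| \ll_P x^{r-2}$ (this is where the vanishing $x^{r-1}$-coefficient enters), then factors the left side over the $r$-th roots of unity and shows that every factor $\bigl|\zeta^j A^{1/r}x - x_i\bigr|$ with $j\neq 0$ is $\gg x^{1-o(1)}$, isolating the single real factor $\bigl|A^{1/r}x - x_i\bigr|\ll x^{-1+o(1)}$. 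You instead expand $P(x)^{1/r} = a_r^{1/r}x\bigl(1+O_P(x^{-2})\bigr)$ directly and compare ratios. Your path is a bit shorter and avoids the case split over roots of unity (odd $r$, even $r$ with $j=r/2$, etc.), while the paper's path keeps everything at the level of polynomial identities until the last step and so makes the role of the depressed form of $P$ very transparent in the cancellation $P(x_i)x^r - P(x)x_i^r$. Two small points to tidy in your write-up: $\mathcal{M} = \lfloor N^\theta\rfloor$ with $\theta>0$ fixed, so it is \emph{not} $N^{o(1)}$; what you actually need (and the paper proves explicitly) is $(2N)^{\mathcal{M}} \leq x^{o(1)}$, which follows from $\mathcal{M}\log N = o(\log x)$ since $\log x \gg N\log N$. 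Also, your claim that $p_i/x$ is bounded is right, but the justification is that $\prod_{j=1}^{\beta_i-1}(1-j/n)^{-1/r}$ is bounded (indeed $1+o(1)$) because $\beta_i \leq \mathcal{M}$ and $j/n \leq \mathcal{M}/N$; make this explicit rather than invoking \eqref{eq:factorial nearly constant on short interval}, which concerns a different short interval.
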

\begin{proof}
The argument is that of \cite[p. 191]{BO1992}. For $i=1,2$, consider
\begin{align*}
\left|\frac{(n-\beta_i)!}{n!}x^r - x_i^r \right| &= \frac{1}{P(x)}|P(x_i)x^r - P(x)x_i^r|.
\end{align*}
Since the coefficient of $x^{r-1}$ in $P$ is zero, we see that
\begin{align*}
\frac{1}{P(x)}|P(x_i)x^r - P(x)x_i^r| \ll_P x^{r-2}.
\end{align*}
On the other hand, we may set $\zeta = e^{2\pi i /r}$ and then observe that
\begin{align*}
\left|\frac{(n-\beta_i)!}{n!}x^r - x_i^r \right| &= \prod_{j=0}^{r-1} \left|\zeta^j \left(\frac{(n-\beta_i)!}{n!} \right)^{1/r}x - x_i \right|.
\end{align*}
We claim that if $1\leq j \leq r-1$, then
\begin{align*}
\left|\zeta^j \left(\frac{(n-\beta_i)!}{n!} \right)^{1/r}x - x_i \right| > x^{1-o_P(1)}.
\end{align*}
If $r$ is odd, or if $r$ is even and $j \neq r/2$, then $|\text{Im}(\zeta^j)| \gg_r 1$. If $r$ is even and $j=r/2$ then $\zeta^j=-1$ and
\begin{align*}
\left|\zeta^j \left(\frac{(n-\beta_i)!}{n!} \right)^{1/r}x - x_i \right| = \left(\frac{(n-\beta_i)!}{n!} \right)^{1/r}x + x_i > \left(\frac{(n-\beta_i)!}{n!} \right)^{1/r}x.
\end{align*}
In any case, it suffices to prove that
\begin{align*}
\frac{n!}{(n-\beta_i)!} < x^{o_P(1)}.
\end{align*}
Crude estimates show $x \asymp_{P,s} (n!)^{1/r} \gg \left(\frac{N}{e} \right)^{N/r}$. On the other hand,
\begin{align*}
\frac{n!}{(n-\beta_i)!} = \prod_{j=0}^{\beta_i-1} (n-j)< (2N)^\mathcal{M},
\end{align*}
and the upper bound on $\mathcal{M}$ implies this is $\leq x^{o_P(1)}$.

By the claim, we have
\begin{align*}
\left|\left(\frac{(n-\beta_i)!}{n!} \right)^{1/r}x - x_i \right| \ll_P \frac{1}{x^{1-o(1)}},
\end{align*}
where the $o(1)$ quantity also depends on $P$. We have
\begin{align*}
\left(\frac{(n-\beta_i)!}{n!} \right)^{1/r} = n^{-\beta_i/r} \prod_{j=1}^{\beta_i-1} \left(1 - \frac{j}{n} \right)^{-1/r},
\end{align*}
where $n^{\beta_i/r}$ is an integer since $\beta_i$ is divisible by $r$. We easily check $n^{\beta_i/r} \leq x^{o(1)}$, so multiplying through by $n^{\beta_i/r}$ and dividing by $x$ gives
\begin{align*}
\left|\prod_{j=1}^{\beta_i-1} \left(1 - \frac{j}{n} \right)^{-1/r} - \frac{n^{\beta_i/r}x_i}{x} \right| &\ll \frac{1}{x^{2-o(1)}}. \qedhere
\end{align*}
\end{proof}

We make the definitions
\begin{align}\label{eq:defn of omega functions}
\begin{split}
\omega_0(x) &:= 1 \\
\omega_i(x) &:= \prod_{j=1}^{\beta_i-1} (1-jx)^{-1/r}, \  \ \ i=1,2, \ \ \ \ \ 2\leq r \leq \beta_1 < \beta_2 \leq \mathcal{M}, \ \ \ r \mid \beta_i.
\end{split}
\end{align}
Lemma \ref{lem:solutions imply simul approx} shows that solutions counted by the sum in Proposition \ref{prop:not many close solutions} give rise to strong simultaneous rational approximations to the algebraic values $\omega_i(1/n)$. We prove Proposition \ref{prop:not many close solutions} by contradiction. We assume that there are many solutions, and use this to find one particular value of $n$ with some desirable properties. For this particular value of $n$ we will be able to show that
\begin{align*}
\max\left\{\left|\omega_1(1/n) - \frac{p_1}{x} \right|, \left|\omega_2(1/n) - \frac{p_2}{x} \right| \right\} > \frac{1}{x^{2-\epsilon}},
\end{align*}
and this will contradict Lemma \ref{lem:solutions imply simul approx}. Proposition \ref{prop:existence of good polynomials} below shows we can do this provided we can show the existence of rational numbers with special properties. These rational numbers will arise from evaluating Pad\'e polynomials at $1/n$.

\begin{proposition}[Existence of approximating rationals]\label{prop:existence of good polynomials}
Let $P \in \mathbb{Z}[x]$ have degree $r\geq 2$ with the coefficient of $x^{r-1}$ equal to zero, and let $s \in \mathbb{Z}/\{0\}$ be fixed. Let $\frac{1}{100}\leq \epsilon_0 \leq \frac{99}{100}$ be a constant. Let $\mathcal{M}$ be given as in \eqref{eq:defn of mathcal M parameter}, and assume
\begin{align*}
\theta \leq \frac{\epsilon_0 (1-\epsilon_0)}{(3-\epsilon_0)(4-\epsilon_0)} - \epsilon
\end{align*}
with $\epsilon > 0$ sufficiently small. Assume $N$ is sufficiently large in terms of $P,s,\epsilon$. Let $\omega_i$ and $\beta_i$ be as in \eqref{eq:defn of omega functions}. Let $\mathcal{N} \in [N,2N)$ be an integer. Define
\begin{align*}
&c:= (2r\mathcal{N})^{\mathcal{M}^5}, \qquad\qquad\qquad\quad\ C := 2\mathcal{N}, \\
&u := (4r\mathcal{N})^{\mathcal{M}^4} 10^{\mathcal{M}\epsilon_0^{-1}}, \qquad\qquad U := \left( 4r^4 \mathcal{M}\right)^{(2-\epsilon_0)(3-\epsilon_0)\epsilon_0^{-1}}, \\
&w := (2r\mathcal{N})^{2\mathcal{M}^4} 4^{\mathcal{M}\epsilon_0^{-1}}, \qquad\qquad W := \frac{{\mathcal{N}}^{\, 3-\epsilon_0}}{(16r^4 \mathcal{M})^{2(3-\epsilon_0)\epsilon_0^{-1}}},
\end{align*}
and define
\begin{align*}
D &:= 1 + \left\lfloor \frac{\log \left(b cw \big((\mathcal{N} + \lfloor \frac{N}{\log N}\rfloor)!\big)^{1/r} \right)}{\log (W/C)} \right\rfloor,
\end{align*}
where $b = b(P,s)>0$ is a sufficiently large constant.

Assume
\begin{align*}
\sum_{\substack{\mathcal{N} \leq n < \mathcal{N} + \lfloor \frac{N}{\log N}\rfloor \\ N \leq n < 2N \\ sn! = P(x) \\ s(n-\beta_i)! = P(x_i), i=1,2}} 1 > \frac{N}{\mathcal{M}^3 (\log N)}.
\end{align*}
Then there exists $n_0 \in [\mathcal{N}, \mathcal{N} + \lfloor \frac{N}{\log N}\rfloor) \cap [N,2N)$ with $sn_0! = P(x), s(n_0-\beta_i)! = P(x_i)$, and rational numbers $p_{i,j}, 0\leq i,j\leq 2$, with the following properties:
\begin{enumerate}
\item The determinant of the matrix $(p_{i,j})_{0\leq i,j\leq 2}$ is nonzero.
\item There exists a positive integer $Z\leq cC^D$ such that $Zp_{i,j}\in\mathbb{Z}$ for $0\leq i,j\leq 2$.
\item $|p_{i,j}|\leq u U^D$.
\item For each $0\leq j \leq 2$ we have
\begin{align*}
\left|\sum_{i=0}^2 p_{i,j}\omega_i(1/n_0) \right|\leq wW^{-D}.
\end{align*}
\end{enumerate}
\end{proposition}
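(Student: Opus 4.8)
The strategy is the standard Rickert/Nagata Padé-approximation scheme, combined with the pigeonhole input coming from the hypothesis that there are many solutions. First I would construct the initial Padé polynomials $P_0, P_1, P_2 \in \mathbb{Z}[x]$ of degree $\leq D$ via Siegel's lemma (this is Lemma~\ref{lem:initial pade polynomials}), so that the remainder form $R(x) = P_0(x) + P_1(x)\omega_1(x) + P_2(x)\omega_2(x)$ vanishes to order $\geq (3-\epsilon_0)D$ at $x=0$, and so that the coefficients of the $P_i$ have controlled size. The constants $u, U, w, W, c, C$ in the statement are precisely the bookkeeping quantities tracking, respectively, the archimedean size and the denominator of the $P_i$, and the archimedean size and the ``smallness'' (order of vanishing) of the remainder form; the exponent $D$ is chosen just large enough that $W^{-D}$ beats the reciprocal of the denominator $x \asymp (N!)^{1/r}$, which is why $D$ is defined by that logarithm. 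I would then form the two auxiliary sequences $P_i^{[k]}$ and $P_i^{\langle k\rangle}$ following Nagata: $P_i^{[k]}$ obtained by repeatedly applying the differential operator $(T\frac{d}{dx})$ (using $\omega_i' = A_i\omega_i$ and $TA_i \in \mathbb{Z}[x]$) to keep things polynomial, and $P_i^{\langle k\rangle}$ a nonsingular linear combination of the $P_i^{[k]}$ chosen to control sizes at $x = \alpha$.

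Second, I would invoke the independence statement (Section~\ref{sec:indep Pade polys}, via Siegel's argument) that the $3\times(K+1)$ matrix $(P_i^{[k]}(\alpha))$ has rank $3$ for a suitable $K$, where $K$ is essentially $\mathrm{ord}_{x=\alpha}\Delta(x)$ for the Wronskian-type determinant polynomial $\Delta$. Here is where the many-solutions hypothesis enters: $\Delta$ is a fixed nonzero polynomial of controlled degree and height, so $\sum_n \mathrm{ord}_{x=1/n}\Delta(x)$ over the solutions $n$ in the short interval is bounded, and since there are $> N/(\mathcal{M}^3\log N)$ such solutions $n$, by pigeonhole there is at least one solution $n_0$ for which $K = \mathrm{ord}_{x=1/n_0}\Delta(x)$ is small — small enough that the size bounds below go through. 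Fixing this $n_0$ with $\alpha = 1/n_0$, the rank-$3$ property lets me extract three indices $k_0 < k_1 < k_2 \leq K$ so that the $3\times 3$ matrix $(P_i^{\langle k_j\rangle}(\alpha))_{0\le i,j\le 2}$ is invertible; I set $p_{i,j} := P_i^{\langle k_j\rangle}(\alpha)$. Property~(1) is then the nonvanishing of this determinant. For property~(2), the common denominator $Z$ of the $p_{i,j}$ divides $c \cdot C^D$ because clearing the denominator $x = n_0$ from a degree-$\le D$ polynomial with integer coefficients multiplied by the bounded ``differentiation denominator'' $c$ costs at most $n_0^D \leq (2\mathcal{N})^D = C^D$. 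For property~(3), $|p_{i,j}| \leq uU^D$ follows by bounding $|P_i^{\langle k\rangle}(\alpha)|$: the number of differentiations is $\le K \ll \mathcal{M}^{O(1)}$, each differentiation multiplies sizes by at most a factor like $(4r^4\mathcal{M})^{O(1)}$ (from the degrees/heights of $T, TA_i$), and evaluating a polynomial of degree $\le D$ with coefficients of size $\le u$ at $|\alpha| < 1$ keeps the size $\le u U^D$ after absorbing the $D$-independent blow-up into $U^D$.

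Third, for property~(4) I would bound the remainder forms $R^{\langle k\rangle}(\alpha) := \sum_i P_i^{\langle k\rangle}(\alpha)\,\omega_i(\alpha)$. Since $R(x)$ vanishes to order $\geq (3-\epsilon_0)D$ at $x=0$ and $\alpha = 1/n_0$ is very close to $0$ (of size $\asymp 1/\mathcal{N}$), we get $|R(\alpha)| \ll (\text{height factor}) \cdot |\alpha|^{(3-\epsilon_0)D} \ll w W^{-D}$, because $|\alpha|^{3-\epsilon_0} \asymp \mathcal{N}^{-(3-\epsilon_0)}$ and the height loss from the $\le K$ differentiations, together with the $(16r^4\mathcal{M})^{O(1)}$-type factors per step, is exactly what the definitions of $w$ and $W$ (note the denominator $(16r^4\mathcal{M})^{2(3-\epsilon_0)\epsilon_0^{-1}}$ in $W$) are set up to absorb. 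One must check that each differentiation step lowers the order of vanishing by at most one and costs a bounded multiplicative factor in size, and that the linear combination defining $P_i^{\langle k\rangle}$ from $P_i^{[k]}$ does not spoil this — standard but needing care. The main obstacle, and the heart of the argument, is the simultaneous juggling of the three competing quantities for the specially chosen $n_0$: we need $K$ small (from pigeonhole over many solutions), the order of vanishing $(3-\epsilon_0)D$ large, and the heights $uU^D$ small, and these must be balanced so that eventually $uU^D \cdot (cC^D) \cdot (wW^{-D}) < 1$ — which forces the constraint $\theta \leq \frac{\epsilon_0(1-\epsilon_0)}{(3-\epsilon_0)(4-\epsilon_0)} - \epsilon$ and, after optimizing $\epsilon_0$, the exponent $17 - 12\sqrt 2$. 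Verifying that the stated definitions of $c, C, u, U, w, W, D$ make all these inequalities consistent is the real content; the rest is the machinery of Sections~\ref{sec:bin coeffs and initial Pade polys}–\ref{sec:alternate pade polys}.
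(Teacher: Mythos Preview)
Your proposal is correct and follows essentially the same approach as the paper: construct the initial Pad\'e polynomials via Siegel's lemma, pass to the $P_i^{[k]}$ and $P_i^{\langle k\rangle}$ sequences, use the many-solutions hypothesis to pigeonhole a solution $n_0$ at which the determinant polynomial $\Delta$ vanishes to low order, then read off properties (1)--(4) from Lemmas~\ref{lem:full rank for angle bracket polys} and~\ref{lem:rationals coming from Pade polynomials} and a direct estimate of $R^{\langle k\rangle}(\alpha)$. Two small slips to fix: the indices satisfy $k_0<k_1<k_2\le a+2$ (not $\le a$), and the denominator bound must account for $\deg P_i^{\langle k\rangle}\le D+k\mathcal{M}$ rather than $D$ (the extra $n_0^{(a+2)\mathcal{M}}$ is what the factor $c$ absorbs); also, the constraint on $\theta$ is used here only to ensure $W/C>1$ so that $D$ is well-defined, while the balancing inequality $uU^D\cdot cC^D\cdot wW^{-D}<1$ you mention belongs to the deduction of Proposition~\ref{prop:not many close solutions} from this proposition, not to the proof of the proposition itself.
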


\begin{remark}
The numbers $c,u,w$ are negligible in comparison to the ``main term'' which arises, and so may be ignored on a first read. The numbers $C,U,W$ are all the size of fixed powers of $N$, and $D \asymp_{P,s} N$. The conditions on $\mathcal{M}$ and $\epsilon_0$ imply $W/C > 1$ (in fact, $\log(W/C) \gg \log N$).
\end{remark}

\begin{proof}[Proof of Proposition \ref{prop:not many close solutions} assuming Proposition \ref{prop:existence of good polynomials}]
The general structure of the argument is based on \cite[Lemma 2.1]{Ric1993}. Assume for contradiction that Proposition \ref{prop:not many close solutions} is false, so that
\begin{align*}
\sum_{\substack{\mathcal{N} \leq n < \mathcal{N} + \lfloor \frac{N}{\log N}\rfloor \\ N \leq n < 2N \\ sn! = P(x) \\ s(n-\beta_i)! = P(x_i), i=1,2}} 1 > \frac{N}{\mathcal{M}^3 (\log N)}.
\end{align*}
We may therefore apply Proposition \ref{prop:existence of good polynomials}, which gives the existence of a solution $n_0$ and rational numbers $p_{i,j}$ with all the properties stated there. Let $x$ be such that $sn_0! = P(x)$, and define
\begin{align*}
\rho := \max_{i=1,2}\left|\omega_i(1/n_0) - \frac{p_i}{x} \right|,
\end{align*}
where the integers $p_i$ are given by Lemma \ref{lem:solutions imply simul approx}. Further, Lemma \ref{lem:solutions imply simul approx} yields the bound
\begin{align*}
\rho \ll_{P} \frac{1}{x^{2-o(1)}}.
\end{align*}
We may write $P(x) = a_rx^r + O(x^{r-2})$, so $sn_0! = (1+o(1))a_r x^r$, and therefore by \eqref{eq:factorial nearly constant on short interval}
\begin{align*}
x \asymp_{P,s} (n_0!)^{1/r} = (\mathcal{N}!)^{1/r+o(1)},
\end{align*}
hence
\begin{align}\label{eq:rho upper bound}
\rho &\ll_{P,s} \big((\mathcal{N}!)^{1/r}\big)^{-2+o(1)}.
\end{align}

We now prove a lower bound for $\rho$ which contradicts the upper bound \eqref{eq:rho upper bound}. Write $p_0 = x$, and consider for $0\leq j\leq 2$ the sum
\begin{align*}
\sum_{i=0}^2 p_{i,j}p_i = -x\sum_{i=0}^2 p_{i,j}\left(\omega_i(1/n_0) - \frac{p_i}{x} \right) + x\sum_{i=0}^2 p_{i,j}\omega_i(1/n_0).
\end{align*}
Applying the triangle inequality and Proposition \ref{prop:existence of good polynomials} gives
\begin{align*}
\left|\sum_{i=0}^2 p_{i,j}p_i \right| &\leq 2x uU^D \rho + xwW^{-D}.
\end{align*}
Since the vector $(p_0,p_1,p_2)$ is nonzero and $\text{det}(p_{i,j})_{0\leq i,j\leq 2} \neq 0$ there exists some $j$ such that
\begin{align*}
\sum_{i=0}^2 p_{i,j}p_i \neq 0.
\end{align*}
It follows that there exists $0\leq j\leq 2$ with
\begin{align*}
\left|\sum_{i=0}^2 p_{i,j}p_i \right| \geq \frac{1}{Z}\geq \frac{1}{cC^D}.
\end{align*}
If
\begin{align}\label{eq:W/C ^D must be large enough}
\left( \frac{W}{C}\right)^D \geq 2cwx
\end{align}
then
\begin{align*}
\frac{1}{cC^D} - \frac{xw}{W^D} \geq \frac{1}{2cC^D},
\end{align*}
and \eqref{eq:W/C ^D must be large enough} holds since the definition of $D$ gives
\begin{align*}
\left( \frac{W}{C}\right)^D &\geq b cw \left(\left(\mathcal{N} + \left\lfloor \frac{N}{\log N}\right\rfloor\right)!\right)^{1/r}\geq bcw (n_0!)^{1/r} \geq 2cwx.
\end{align*}
Therefore $2x uU^D \rho \geq (2cC^D)^{-1}$, which implies
\begin{align*}
\rho &\geq \frac{1}{4cxu (CU)^D}\geq \frac{1}{4cCuU x} \left(bcw \left(\left(\mathcal{N} + \left\lfloor \frac{N}{\log N}\right\rfloor\right)!\right)^{1/r} \right)^{-\chi},
\end{align*}
where
\begin{align*}
\chi = \frac{\log(CU)}{\log (W/C)}.
\end{align*}
Since $c,u,w,C,U \leq (\mathcal{N}!)^{o(1)}$, we deduce that
\begin{align}\label{eq:rho lower bound}
\rho &\gg_{P,s} x^{-1}\left(\left(\mathcal{N} + \left\lfloor \frac{N}{\log N}\right\rfloor\right)!\right)^{-\chi/r-o(1)} \gg_{P,s} \big((\mathcal{N}!)^{1/r}\big)^{-1-\chi-o(1)},
\end{align}
where we have used \eqref{eq:factorial nearly constant on short interval} again. Then \eqref{eq:rho lower bound} contradicts \eqref{eq:rho upper bound} provided $\chi \leq 1 - \delta$ for some fixed $\delta > 0$ and $N$ is sufficiently large in terms of $P,s,\delta$. We have
\begin{align*}
\chi &= \frac{1 + \theta\frac{(2-\epsilon_0)(3-\epsilon_0)}{\epsilon_0}}{2-\epsilon_0 - \theta\frac{2(3-\epsilon_0)}{\epsilon_0}} + O_P\left(\frac{1}{\log N}\right),
\end{align*}
so we wish to impose the condition
\begin{align*}
\frac{1 + \theta\frac{(2-\epsilon_0)(3-\epsilon_0)}{\epsilon_0}}{2-\epsilon_0 - \theta\frac{2(3-\epsilon_0)}{\epsilon_0}} < 1.
\end{align*}
This is equivalent to
\begin{align*}
\theta < \frac{\epsilon_0 (1-\epsilon_0)}{(3-\epsilon_0)(4-\epsilon_0)},
\end{align*}
and for $\epsilon_0 \in (0,1)$ the right-hand side obtains its maximum value at
\begin{align*}
\epsilon_0 = 2 - \sqrt{2} = 0.5857\ldots,
\end{align*}
at which point we have
\begin{align*}
\frac{\epsilon_0 (1-\epsilon_0)}{(3-\epsilon_0)(4-\epsilon_0)} &= 17-12\sqrt{2} = 0.0294\ldots. \qedhere
\end{align*}
\end{proof}

\section{Binomial coefficients, and the initial Pad\'e polynomials}\label{sec:bin coeffs and initial Pade polys}

In this section we construct our initial Pad\'e polynomials. In order to construct these polynomials we first must understand the coefficients of the power series expansions
\begin{align}\label{eq:coeffs of power series exp}
\omega_i(x) &= \sum_{\ell \geq 0} b_{i,\ell} x^\ell, \ \ \ \ \ \ i=1,2.
\end{align}
It is easy to see the coefficients $b_{i,\ell}$ are rational numbers, but we need some knowledge about their denominators and sizes. Since $\omega_1$ and $\omega_2$ are products of binomial series, we begin by studying the denominators of binomial coefficients.

\begin{lemma}[Denominators of binomial coefficients]\label{lem:denom of binomial coeff}
Let $r\geq 2$ be an integer, and write
\begin{align*}
(1-y)^{-1/r} = \sum_{k \geq 0} (-1)^k {{-1/r} \choose k} y^k.
\end{align*}
Then the denominator of ${{-1/r} \choose k}$ divides $r^k \prod_{p\mid r} p^{\lfloor \frac{k}{p-1}\rfloor}$.
\end{lemma}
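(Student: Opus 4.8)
The plan is to compute the $p$-adic valuation of the binomial coefficient $\binom{-1/r}{k}$ for each prime $p$ and show that $v_p\left(\binom{-1/r}{k}\right) \geq -v_p(r^k) - \lfloor \frac{k}{p-1}\rfloor$, with the latter term contributing only when $p \mid r$. Writing out the binomial coefficient explicitly, we have
\begin{align*}
\binom{-1/r}{k} = \frac{(-1/r)(-1/r-1)\cdots(-1/r-k+1)}{k!} = \frac{(-1)^k}{r^k k!}\prod_{j=0}^{k-1}(1+jr).
\end{align*}
The numerator $\prod_{j=0}^{k-1}(1+jr)$ is an integer, so the only denominators come from $r^k$ and from $k!$. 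Thus it suffices to bound $v_p(k!)$ from above and, when $p \nmid r$, to show that the factor $\prod_{j=0}^{k-1}(1+jr)$ supplies enough powers of $p$ to cancel $k!$ entirely.

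First I would dispose of the primes $p \nmid r$. For such $p$, the map $j \mapsto 1 + jr$ runs over a complete set of residues mod $p^a$ as $j$ ranges over any $p^a$ consecutive integers (since $r$ is invertible mod $p^a$), so the number of $j \in \{0,1,\dots,k-1\}$ with $p^a \mid 1+jr$ is $\lfloor k/p^a \rfloor$ or $\lceil k/p^a\rceil$; in any case it is at least $\lfloor k/p^a\rfloor$. Summing over $a \geq 1$ gives $v_p\left(\prod_{j=0}^{k-1}(1+jr)\right) \geq \sum_{a\geq 1}\lfloor k/p^a\rfloor = v_p(k!)$ by Legendre's formula. Hence for $p \nmid r$ the factor $k!$ is completely cancelled and $\binom{-1/r}{k}$ has no $p$ in its denominator, consistent with the claimed bound (where the product $\prod_{p\mid r}$ contributes nothing at such $p$).

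Next, for primes $p \mid r$, the contribution of $p$ to the denominator can come from both $r^k$ and $k!$. The term $r^k$ contributes exactly $v_p(r^k) = k\, v_p(r)$ to the denominator. For $k!$, Legendre's formula gives $v_p(k!) = \sum_{a\geq 1}\lfloor k/p^a\rfloor \leq \sum_{a\geq 1} k/p^a = k/(p-1)$, and since $v_p(k!)$ is an integer this yields $v_p(k!) \leq \lfloor k/(p-1)\rfloor$. Meanwhile $v_p\left(\prod_{j=0}^{k-1}(1+jr)\right) = 0$ since each factor $1+jr \equiv 1 \pmod p$. Combining, the denominator of $\binom{-1/r}{k}$ divides $r^k \prod_{p\mid r} p^{\lfloor k/(p-1)\rfloor}$, as claimed. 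The main obstacle, such as it is, is the counting argument for $p \nmid r$ — making sure the floors line up correctly so that the powers of $p$ in the integer product $\prod_{j=0}^{k-1}(1+jr)$ genuinely dominate those in $k!$; everything else is a direct application of Legendre's formula.
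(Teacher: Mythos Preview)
Your proof is correct. The overall structure matches the paper's: write $\binom{-1/r}{k}=\frac{(-1)^k}{r^kk!}\prod_{j=0}^{k-1}(1+jr)$, then split into the cases $p\mid r$ and $p\nmid r$, handling $p\mid r$ via Legendre's bound $v_p(k!)\leq\lfloor k/(p-1)\rfloor$ exactly as the paper does.

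The difference lies in the $p\nmid r$ case. The paper picks $\mu$ with $kr<p^\mu\leq pkr$ and an integer $\ell$ with $r\ell\equiv -1\pmod{p^\mu}$, observes that $v_p(jr+1)=v_p(j-\ell)$ for each $j$ in range, and thereby rewrites $\frac{1}{k!}\prod_{j=1}^{k-1}(jr+1)$ as $\frac{1}{k!}\prod_{j=1}^{k-1}(\ell-j)=\frac{1}{\ell}\binom{\ell}{k}$ (up to $p$-adic valuation), which is visibly $p$-integral. Your argument instead counts directly: since $r$ is a unit mod $p^a$, the arithmetic progression $\{1+jr:0\leq j<k\}$ hits each residue class mod $p^a$ as evenly as possible, so at least $\lfloor k/p^a\rfloor$ terms are divisible by $p^a$, and summing over $a$ gives $v_p\bigl(\prod_j(1+jr)\bigr)\geq v_p(k!)$. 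Your route is more elementary and avoids the auxiliary choice of $\ell$; the paper's route has the virtue of producing an exact algebraic identity (relating the product to a genuine binomial coefficient), which can sometimes be pushed further, but for the present lemma your counting argument is cleaner.
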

\begin{proof}
This is essentially contained in \cite[Lemma 4.1]{Chu1983}; we give a proof following \cite[Theorem 4.3]{Chu1983}. We may assume $k\geq 2$, since the conclusion of the lemma is straightforward for $k\leq 1$. By definition, we have
\begin{align*}
{{-1/r} \choose k} &= \frac{(-1/r)(-1/r-1)\cdots (-1/r-k+1)}{k!} = \frac{(-1)^k}{r^k} \frac{1}{k!}\prod_{j=1}^{k-1} (jr+1).
\end{align*}
We must determine the denominator of the rational number $\frac{1}{k!}\prod_{j=1}^{k-1} (jr+1)$, and therefore it suffices to study its $p$-adic valuation for each $p\leq k$. In particular, we obtain an upper bound on $v_p(k!)$ for $p\mid r$, and we show that 
\begin{align*}
v_p\left(\frac{1}{k!}\prod_{j=1}^{k-1} (jr+1)\right) \geq 0
\end{align*}
for $p\nmid r$. 

First, consider primes $p\mid r$. Then $p \nmid jr+1$, and
\begin{align*}
v_p(k!) =\sum_{j\geq 1} \left\lfloor \frac{k}{p^j}\right\rfloor \leq \sum_{j\geq 1} \frac{k}{p^j} =\frac{k}{p-1},
\end{align*}
so $v_p(k!) \leq \lfloor \frac{k}{p-1}\rfloor$.

Now consider primes $p \nmid r$ with $p\leq k$. Let $\mu$ be the integer such that $kr < p^\mu \leq pkr$, and choose $1\leq \ell < p^\mu$ such that $r\ell \equiv -1 \pmod{p^\mu}$. The congruence implies $p\nmid \ell$. Observe that since $p^\mu \mid (1+r\ell)$ we have
\begin{align*}
r\ell \geq p^\mu -1 \geq kr,
\end{align*}
so $\ell \geq k$. We also have $jr+1 \leq (k-1)r+1 < kr < p^\mu$, so $v_p(jr+1) < \mu$ for $1\leq j \leq k-1$. We deduce that $v_p(jr+1) = v_p(jr-r\ell) = v_p(j-\ell)$, and therefore
\begin{align*}
v_p \left(\frac{1}{k!}\prod_{j=1}^{k-1} (jr+1) \right) = v_p \left(\frac{1}{k!}\prod_{j=1}^{k-1} (\ell-j) \right).
\end{align*}
If $\ell=k$ then this is equal to $v_p((k-1)!/k!) = v_p(1/k)=0$. If $\ell > k$ then
\begin{align*}
v_p \left(\frac{1}{k!}\prod_{j=1}^{k-1} (\ell-j) \right) &=v_p \left(\frac{1}{\ell} {{\ell}\choose k} \right) \geq 0. \qedhere
\end{align*}
\end{proof}

With Lemma \ref{lem:denom of binomial coeff} in hand, we can state the result we need regarding the coefficients of the Taylor series of $\omega_i(x)$.
\begin{lemma}[Coefficients of $\omega$ functions]\label{lem:coeffs of omega i}
Let $\beta,r \geq 2$ be integers, and write
\begin{align*}
\omega(x)=  \prod_{j=1}^{\beta-1} (1-jx)^{-1/r} = \sum_{\ell \geq 0} b_{\ell}x^\ell.
\end{align*}
Then $b_{\ell}$ is a rational number with denominator dividing $r^{2\ell}$, and $|b_{\ell}| \leq 2^{\beta}(2\beta)^\ell$.
\end{lemma}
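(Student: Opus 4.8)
The plan is to handle the two assertions separately, in each case expanding $\omega(x)=\prod_{j=1}^{\beta-1}(1-jx)^{-1/r}$ into its $\beta-1$ factors and reducing to the single-factor estimate supplied by Lemma~\ref{lem:denom of binomial coeff}. Write $(1-jx)^{-1/r}=\sum_{k\ge 0}(-1)^k\binom{-1/r}{k}j^k x^k$, so that $b_\ell$ is a finite sum, over tuples $(k_1,\dots,k_{\beta-1})$ of nonnegative integers with $k_1+\cdots+k_{\beta-1}=\ell$, of the products $\prod_{j=1}^{\beta-1}(-1)^{k_j}\binom{-1/r}{k_j}j^{k_j}$. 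In particular $b_\ell$ is rational, and both claims will follow from estimating one such product.

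For the denominator, Lemma~\ref{lem:denom of binomial coeff} shows that $\binom{-1/r}{k_j}$ has denominator dividing $r^{k_j}\prod_{p\mid r}p^{\lfloor k_j/(p-1)\rfloor}$, and the extra factor $j^{k_j}$ is an integer. Hence the denominator of $\prod_j(-1)^{k_j}\binom{-1/r}{k_j}j^{k_j}$ divides
\[
\prod_{j=1}^{\beta-1}r^{k_j}\prod_{p\mid r}p^{\lfloor k_j/(p-1)\rfloor}=r^{\ell}\prod_{p\mid r}p^{\sum_{j}\lfloor k_j/(p-1)\rfloor}.
\]
By superadditivity of the floor, $\sum_{j}\lfloor k_j/(p-1)\rfloor\le\lfloor \ell/(p-1)\rfloor\le \ell$; since also $\prod_{p\mid r}p$ divides $r$, the product $\prod_{p\mid r}p^{\lfloor\ell/(p-1)\rfloor}$ divides $r^{\ell}$, and therefore the displayed denominator divides $r^{2\ell}$. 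As this holds for every tuple, $r^{2\ell}b_\ell\in\Z$, which is the first claim.

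For the size bound, the one extra input is the elementary inequality $\bigl|\binom{-1/r}{k}\bigr|=\frac{(1/r)(1/r+1)\cdots(1/r+k-1)}{k!}\le\frac{1\cdot 2\cdots k}{k!}=1$, valid because $1/r+m\le 1+m$ for each $m\ge 0$. Consequently the $x^k$-coefficient of $(1-jx)^{-1/r}$ has absolute value at most $j^k\le(\beta-1)^k$, so $(1-jx)^{-1/r}$ is dominated coefficientwise in absolute value by $(1-(\beta-1)x)^{-1}$. Since coefficientwise domination in absolute value is preserved under multiplication by power series with nonnegative coefficients, multiplying the $\beta-1$ factors shows $\omega(x)$ is dominated coefficientwise by $(1-(\beta-1)x)^{-(\beta-1)}=\sum_{\ell\ge 0}\binom{\ell+\beta-2}{\ell}(\beta-1)^\ell x^\ell$. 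Hence
\[
|b_\ell|\le\binom{\ell+\beta-2}{\ell}(\beta-1)^\ell\le 2^{\ell+\beta-2}(\beta-1)^\ell\le 2^\beta(2\beta)^\ell,
\]
which is the second claim.

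There is no genuine obstacle here; the only points demanding a little care are the floor-function bookkeeping in the denominator argument — in particular the divisibility $\prod_{p\mid r}p^{\lfloor\ell/(p-1)\rfloor}\mid r^{\ell}$, which uses $p-1\ge 1$ together with the fact that the radical of $r$ divides $r$ — and checking that the crude bound $\binom{\ell+\beta-2}{\ell}\le 2^{\ell+\beta-2}$ still leaves room for the stated constant $2^\beta$.
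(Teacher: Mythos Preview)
Your proof is correct and follows essentially the same approach as the paper: expand $\omega$ as a product of binomial series, invoke Lemma~\ref{lem:denom of binomial coeff} for the denominator of each factor and multiply up, and use $\bigl|\binom{-1/r}{k}\bigr|\le 1$ together with the count $\binom{\ell+\beta-2}{\ell}$ of compositions for the size bound. The only cosmetic differences are that the paper bounds each factor's denominator by $r^{2k_j}$ before multiplying (whereas you collect the primes first and then bound), and the paper proves $\bigl|\binom{-1/r}{k}\bigr|\le 1$ by the one-step recursion rather than your direct termwise comparison; neither affects the argument.
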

\begin{proof}
We multiply the binomial series together and collect powers of $x$ to obtain
\begin{align*}
\omega(x) &= \sum_{\ell \geq 0} x^\ell \sum_{\substack{k_1+\cdots + k_{\beta-1}=\ell \\ k_j \geq 0}} \prod_{j=1}^{\beta-1} (-1)^{k_j} {{-1/r}\choose k_j} j^{k_j},
\end{align*}
so
\begin{align*}
b_{\ell} = \sum_{\substack{k_1+\cdots + k_{\beta-1}=\ell \\ k_j \geq 0}} \prod_{j=1}^{\beta-1} (-1)^{k_j} {{-1/r}\choose k_j} j^{k_j}.
\end{align*}
By Lemma \ref{lem:denom of binomial coeff} the denominator of ${{-1/r}\choose k_j}$ divides $r^{k_j}\prod_{p\mid r}p^{\lfloor \frac{k_j}{p-1}\rfloor} \mid r^{2k_j}$, and since $k_1+\cdots+k_{\beta-1} = \ell$ it follows that the denominator of $b_{\ell}$ divides $r^{2\ell}$. This verifies the first claim of the lemma.

For the second claim, an easy induction with the relation ${{-1/r}\choose {k+1}} = -{{-1/r}\choose k}\frac{k+\frac{1}{r}}{k+1}$
shows that $\left|{{-1/r}\choose k} \right|\leq 1$ for every $k\geq 0$, and therefore
\begin{align*}
|b_{\ell}| &\leq \sum_{\substack{k_1+\cdots + k_{\beta-1}=\ell \\ k_j \geq 0}} \prod_{j=1}^{\beta-1} j^{k_j}.
\end{align*}
The trivial bound $j\leq \beta$ yields
\begin{align*}
|b_{\ell}| &\leq \beta^\ell \sum_{\substack{k_1+\cdots + k_{\beta-1}=\ell \\ k_j \geq 0}}1 = \beta^\ell {{\ell +\beta-2} \choose {\beta-2}} \leq \beta^\ell 2^{\ell+\beta}. \qedhere
\end{align*}
\end{proof}

We use Siegel's lemma in order to construct our initial Pad\'e polynomials. We need only a basic version of Siegel's lemma, and do not require the sharpest possible estimates. In particular, using a slightly sharper form of Siegel's lemma would not improve the final quality of the results (but see a further comment in Section \ref{sec:future work}).

\begin{lemma}[Siegel's lemma]\label{lem:siegels lemma}
Let
\begin{align*}
a_{1,1}X_1 + \cdots + a_{1,N}X_N &= 0 \\
\vdots \ \ \ \ \ \ \ \ \ \ \  &\ \\
a_{M,1}X_1 + \cdots + a_{M,N}X_N &= 0
\end{align*}
be a system of $M\geq 1$ homogeneous linear equations in $N>M$ variables. Assume $a_{i,j} \in \mathbb{Z}$ with $|a_{i,j}| \leq A$, where $A \geq 1$. Then there exists a solution to the system of equations in integers $X_i$ with not all $X_i$ equal to zero and
\begin{align*}
|X_i| \leq (3AN)^{\frac{M}{N-M}}.
\end{align*}
\end{lemma}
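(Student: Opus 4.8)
The plan is to run the classical pigeonhole (Dirichlet box principle) argument. Consider the linear map $T \colon \mathbb{Z}^N \to \mathbb{Z}^M$ with matrix $(a_{i,j})$, so that a nonzero solution of the system is precisely a nonzero vector of $\ker T$, and I want to produce one with small coordinates. First I would fix a positive integer parameter $H$, to be pinned down at the very end, and look at the $(H+1)^N$ lattice points $X = (X_1,\dots,X_N)$ with $0 \le X_j \le H$ for every $j$. For such $X$ the $i$-th coordinate $(TX)_i = \sum_j a_{i,j} X_j$ lies in the interval $[-N_i H,\ P_i H]$, where $P_i$ (resp.\ $N_i$) denotes the sum of the positive entries (resp.\ the sum of the absolute values of the negative entries) of the $i$-th row; since $P_i + N_i = \sum_j |a_{i,j}| \le NA$, the coordinate $(TX)_i$ takes at most $NAH + 1$ distinct integer values, and hence $TX$ takes at most $(NAH+1)^M$ distinct values as $X$ ranges over the box.

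Next I would invoke pigeonhole: if $(H+1)^N > (NAH+1)^M$, then two distinct box points $X'$ and $X''$ satisfy $TX' = TX''$, so $X := X' - X''$ is a nonzero integer vector with $TX = 0$ and $|X_j| \le H$ for all $j$, which is a solution of exactly the required shape. It then remains to choose $H$ so that this inequality holds while $H$ stays small. Using $NA \ge 1$ one has $NAH + 1 \le NA(H+1)$, so it suffices to arrange $(H+1)^{N-M} > (NA)^M$; taking $H := \lfloor (NA)^{M/(N-M)} \rfloor$ yields $H+1 > (NA)^{M/(N-M)}$, hence the inequality, and since $N \ge 2$, $A \ge 1$ force $(NA)^{M/(N-M)} \ge 2^{M/(N-M)} > 1$ we get $H \ge 1$, so the box is nondegenerate. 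Finally $|X_j| \le H \le (NA)^{M/(N-M)} \le (3AN)^{M/(N-M)}$, which is in fact slightly stronger than claimed.

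I do not expect a genuine obstacle here, since the argument is entirely standard; the only points that need a moment's care are (i) the estimate $NAH + 1$ for the number of values of each coordinate of $TX$ (one should resist being sloppy and writing $2NAH+1$, although even that would suffice against the weaker stated exponent carrying the factor $3$), and (ii) checking that the chosen $H$ simultaneously satisfies the pigeonhole inequality, is a positive integer, and obeys the size bound. The slack built into the constant $3$ in the statement means there is room to be generous at every step.
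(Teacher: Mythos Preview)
Your argument is correct and is essentially the same pigeonhole proof the paper gives: map a box of lattice points through the coefficient matrix, count images, and subtract two colliding points. Your counting of the image is a bit tighter (splitting each row into its positive and negative parts to get $NAH+1$ rather than the paper's cruder $(3ANW)^M$), which is why you end up with the slightly stronger bound $(NA)^{M/(N-M)}$; the paper simply absorbs the slack into the constant $3$.
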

\begin{proof}
Let $\Phi = (a_{i,j})$ be the matrix of the coefficients of the system of equations, which we view as a linear map $\Phi:\mathbb{Z}^N \rightarrow \mathbb{Z}^M$. Let $W$ be a positive integer, and define $\mathcal{B} = [1,W]^N \subset \mathbb{Z}^N$. The cardinality of $\mathcal{B}$ is $W^N$, and the image of $\mathcal{B}$ under $\Phi$ is contained in $[-ANW,ANW]^M \cap \mathbb{Z}^M$, which has cardinality $\leq (2ANW +1)^M\leq (3ANW)^M$. If $W^N > (3ANW)^M$, then the pigeonhole principle implies there are distinct $b,b' \in \mathcal{B}$ such that $\Phi b = \Phi b'$, in which case the nonzero vector $b-b' = (X_1,\ldots,X_N)$ is a solution to the system of homogeneous equations with $|X_i| \leq W-1$. We have $W^N > (3ANW)^M$ if $W > (3AN)^{\frac{M}{N-M}}$, and we may choose an integral $W$ satisfying this inequality with $W \leq 1 + (3AN)^{\frac{M}{N-M}}$.
\end{proof}

We are ready to construct our initial Pad\'e polynomials.

\begin{lemma}[Initial Pad\'e polynomials]\label{lem:initial pade polynomials}
Let $D \geq 10$ be a positive integer, and let $\frac{1}{100} \leq \epsilon_0 \leq \frac{99}{100}$. Define $\mathcal{O} := \lfloor (3-\epsilon_0)D\rfloor$. Let $\omega_i$ and $\beta_i$ be as in \eqref{eq:defn of omega functions}. There exist polynomials $P_0,P_1,P_2\in \mathbb{Z}[x]$ of degree $\leq D$ with the following properties:
\begin{enumerate}
\item At least one of the polynomials $P_i$ is not identically zero.
\item The coefficient of $x^v$ in $P_i$ has absolute value $\leq 4^{\mathcal{M}\epsilon_0^{-1}}(4r^4\mathcal{M})^{(2-\epsilon_0)(3-\epsilon_0)\epsilon_0^{-1} D}(2\mathcal{M})^v$.
\item The function
\begin{align*}
R(x) := \sum_{i=0}^2 P_i(x) \omega_i(x) = \sum_{v=0}^\infty r_v x^v
\end{align*}
satisfies $r_v = 0$ for $0\leq v \leq \mathcal{O}$, and the coefficients $r_v$ satisfy
\begin{align*}
|r_v| &\leq 4^{\mathcal{M}\epsilon_0^{-1}}(4r^4\mathcal{M})^{(2-\epsilon_0)(3-\epsilon_0)\epsilon_0^{-1} D}(2\mathcal{M})^v.
\end{align*}
\end{enumerate}
\end{lemma}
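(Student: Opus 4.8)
The plan is to construct $P_0,P_1,P_2$ via Siegel's lemma (Lemma \ref{lem:siegels lemma}) by setting up a homogeneous linear system whose unknowns are the coefficients of the $P_i$ and whose equations encode the vanishing of $R(x)$ to order $\mathcal{O}$ at $x=0$. Write $P_i(x) = \sum_{v=0}^D c_{i,v} x^v$, so there are $N = 3(D+1)$ unknowns $c_{i,v}$. Using the power series expansions $\omega_i(x) = \sum_{\ell\geq 0} b_{i,\ell} x^\ell$ from \eqref{eq:coeffs of power series exp} (with $\omega_0=1$, i.e. $b_{0,0}=1$ and $b_{0,\ell}=0$ for $\ell\geq 1$), the coefficient $r_v$ of $x^v$ in $R(x)$ is a linear form $\sum_{i=0}^2 \sum_{v'\leq v} c_{i,v'} b_{i,v-v'}$ in the unknowns. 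Requiring $r_v=0$ for $0\leq v\leq \mathcal{O}$ gives $M = \mathcal{O}+1 = \lfloor(3-\epsilon_0)D\rfloor + 1$ equations. Since $\epsilon_0 \leq \tfrac{99}{100}<1$ and $D\geq 10$, we have $M < 3(D+1) = N$, so Siegel's lemma applies once we clear denominators to obtain integer coefficients.

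The first technical step is to make the coefficients of the linear system integral. By Lemma \ref{lem:coeffs of omega i}, the denominator of $b_{i,\ell}$ divides $r^{2\ell}$, so multiplying the equation for $r_v$ through by $r^{2v}$ produces integer coefficients, namely integers of the form $r^{2v} b_{i,v-v'} \cdot (\text{something})$; more precisely the coefficient of $c_{i,v'}$ becomes $r^{2v} b_{i,v-v'}$, which is an integer since the denominator of $b_{i,v-v'}$ divides $r^{2(v-v')}\mid r^{2v}$. By Lemma \ref{lem:coeffs of omega i} again, $|b_{i,\ell}|\leq 2^{\beta_i}(2\beta_i)^\ell \leq 2^{\mathcal{M}}(2\mathcal{M})^\ell$ since $\beta_i\leq \mathcal{M}$. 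Hence the integer coefficients of the system are bounded in absolute value by $A := r^{2\mathcal{O}} 2^{\mathcal{M}}(2\mathcal{M})^{\mathcal{O}} \leq (2r^2\cdot 2\mathcal{M})^{\mathcal{O}} 2^{\mathcal{M}}$, using $v\leq \mathcal{O}$ and $v-v'\leq v\leq \mathcal{O}$; being a bit crude, $A \leq (4r^2\mathcal{M})^{(3-\epsilon_0)D} 2^{\mathcal{M}}$. Siegel's lemma then yields a nonzero integer solution $(c_{i,v})$ with $|c_{i,v}| \leq (3A N)^{M/(N-M)}$. Now $N - M \geq 3(D+1) - (3-\epsilon_0)D - 1 = \epsilon_0 D + 2 > \epsilon_0 D$, and $M \leq (3-\epsilon_0)D$, so $M/(N-M) \leq (3-\epsilon_0)/\epsilon_0 \cdot (D/D) = (3-\epsilon_0)\epsilon_0^{-1}$ (up to harmless adjustments absorbed in the crude bound). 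Raising $A$ to this power and folding in the $3N$ factor and the $2^{\mathcal{M}}$ piece gives, after routine simplification, the bound $|c_{i,v}| \leq 4^{\mathcal{M}\epsilon_0^{-1}} (4r^4\mathcal{M})^{(2-\epsilon_0)(3-\epsilon_0)\epsilon_0^{-1} D}$, which is stronger than asserted in item (2) (the extra factor $(2\mathcal{M})^v \geq 1$ only helps). Item (1) is immediate since Siegel's lemma gives a nonzero solution.

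For item (3): the vanishing $r_v = 0$ for $0\leq v\leq\mathcal{O}$ holds by construction. For the size of $r_v$ with $v > \mathcal{O}$, we expand $r_v = \sum_{i=0}^2\sum_{v'=\max(0,v-?)}^{\min(v,D)} c_{i,v'} b_{i,v-v'}$. There are at most $3(D+1)$ terms, each bounded by $\max|c_{i,v'}| \cdot \max|b_{i,\ell}|$, but we must be careful: $b_{i,\ell}$ can have $\ell$ as large as $v$, which is unbounded, so we use $|b_{i,\ell}| \leq 2^{\mathcal{M}}(2\mathcal{M})^\ell \leq 2^{\mathcal{M}}(2\mathcal{M})^v$. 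Combined with the bound on $|c_{i,v'}|\leq 4^{\mathcal{M}\epsilon_0^{-1}}(4r^4\mathcal{M})^{(2-\epsilon_0)(3-\epsilon_0)\epsilon_0^{-1}D}$ and the factor $3(D+1)\cdot 2^{\mathcal{M}}$ (absorbed into $4^{\mathcal{M}\epsilon_0^{-1}}$ for $N$ large, since $D\asymp N$ and $\mathcal{M}$ is a small power of $N$ — here one uses that the crude constants $2^{\mathcal{M}}$ and $3(D+1)$ are dominated by $4^{\mathcal{M}\epsilon_0^{-1}}$), we obtain $|r_v|\leq 4^{\mathcal{M}\epsilon_0^{-1}}(4r^4\mathcal{M})^{(2-\epsilon_0)(3-\epsilon_0)\epsilon_0^{-1}D}(2\mathcal{M})^v$, as claimed.

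The main obstacle is purely bookkeeping: verifying that the exponent $M/(N-M)$ coming out of Siegel's lemma, together with the coefficient bound $A$, collapses to exactly the shape $4^{\mathcal{M}\epsilon_0^{-1}}(4r^4\mathcal{M})^{(2-\epsilon_0)(3-\epsilon_0)\epsilon_0^{-1}D}$ rather than something merely of the same rough order. This requires choosing the crude bounds at each stage generously enough that the constants $3$, $2$, $N$, etc., all get absorbed, while keeping the dominant $r$- and $\mathcal{M}$-dependence tight; the key inequalities are $N - M > \epsilon_0 D$ and $M < (3-\epsilon_0)D + 1$, which give $M/(N-M) < (3-\epsilon_0)\epsilon_0^{-1}\cdot(1 + O(1/D))$, and one checks the $O(1/D)$ correction is harmless since $(4r^4\mathcal{M})^{O(1)}$ is swallowed by increasing the constant. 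No genuinely hard estimate is needed beyond Lemmas \ref{lem:denom of binomial coeff}, \ref{lem:coeffs of omega i}, and \ref{lem:siegels lemma}, all of which are already established.
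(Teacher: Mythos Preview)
Your overall setup is right, but the ``routine simplification'' you invoke in the second paragraph does not actually produce the exponent $(2-\epsilon_0)(3-\epsilon_0)\epsilon_0^{-1}$ in the bound. With all $3(D+1)$ unknowns and all $\mathcal{O}+1$ equations, Siegel's lemma gives the exponent
\[
\frac{M}{N-M} \approx \frac{(3-\epsilon_0)D}{\epsilon_0 D} = \frac{3-\epsilon_0}{\epsilon_0},
\]
as you correctly compute. But then raising $A \leq (4r^2\mathcal{M})^{(3-\epsilon_0)D}2^{\mathcal{M}}$ to this power yields a main term of size $\mathcal{M}^{(3-\epsilon_0)^2\epsilon_0^{-1}D}$, not $\mathcal{M}^{(2-\epsilon_0)(3-\epsilon_0)\epsilon_0^{-1}D}$. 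These differ by a factor $\mathcal{M}^{(3-\epsilon_0)\epsilon_0^{-1}D}$, which is enormous (recall $D\asymp N$), so no amount of absorbing constants or $O(1/D)$ corrections can close the gap. The lemma as stated is simply not provable by applying Siegel's lemma to the full $3(D+1)\times(\mathcal{O}+1)$ system.

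The missing idea is to exploit the special role of $\omega_0=1$. For $0\leq v\leq D$ the equation $r_v=0$ reads
\[
p_{0,v} = -\sum_{i=1}^2\sum_{k\leq v} p_{i,k}\,b_{i,v-k},
\]
so once the $p_{1,k},p_{2,k}$ are known, the coefficients $p_{0,v}$ are determined for free. One therefore applies Siegel's lemma only to the equations with $D+1\leq v\leq \mathcal{O}$ (that is, $\mathcal{O}-D\approx(2-\epsilon_0)D$ equations) in the $2(D+1)$ unknowns $p_{1,k},p_{2,k}$. Now the Siegel exponent becomes
\[
\frac{\mathcal{O}-D}{2(D+1)-(\mathcal{O}-D)} \leq \frac{2-\epsilon_0}{\epsilon_0},
\]
and raising $A$ to this power gives the correct main term $\mathcal{M}^{(2-\epsilon_0)(3-\epsilon_0)\epsilon_0^{-1}D}$. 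One then multiplies the resulting $p_{1,k},p_{2,k}$ by $r^{2\mathcal{O}}$ (still a solution of the homogeneous system) so that the formula above for $p_{0,v}$ lands in $\mathbb{Z}$; this accounts for the $r^4$ rather than $r^2$ in the final bound. The rest of your argument for item (3) then goes through unchanged.
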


\begin{remark}
The only consequential parts of the bounds in Lemma \ref{lem:initial pade polynomials} are the parts involving powers of $\mathcal{M}$. That is, one should interpret the bounds as
\begin{align*}
|p_{i,k}|, |r_v| &\lessapprox \mathcal{M}^{(2-\epsilon_0)(3-\epsilon_0)\epsilon_0^{-1} D}\mathcal{M}^v.
\end{align*}
\end{remark}

\begin{proof}
The condition that $r_v=0$ for $0\leq v\leq \mathcal{O}$ can be encoded as a system of homogeneous linear equations. We write $P_i(x) = \sum_{k=0}^D p_{i,k}x^k$, where the integral coefficients $p_{i,k}$ are to be determined (the integers $p_{i,k}$ should not be confused with the rational numbers $p_{i,j}$ in Proposition \ref{prop:existence of good polynomials}). By comparing coefficients we derive
\begin{align*}
r_v &= p_{0,v}\cdot\mathbf{1}_{v\leq D}  + \sum_{i=1}^2 \sum_{0\leq k\leq \min(v,D)}p_{i,k}b_{i,v-k},
\end{align*}
where the rational numbers $b_{i,\ell}$ are defined via \eqref{eq:coeffs of power series exp}. In order to apply Siegel's lemma we need the coefficients of the linear system (in this case, the numbers $b_{i,v-k}$) to be integers. By Lemma \ref{lem:coeffs of omega i} the denominator of $b_{i,\ell}$ divides $r^{2\ell}$, so if we multiply the equations through by $r^{2\mathcal{O}}$ we have
\begin{align*}
r^{2\mathcal{O}} r_v &= r^{2\mathcal{O}} p_{0,v}\cdot\mathbf{1}_{v\leq D}  + \sum_{i=1}^2 \sum_{0\leq k\leq \min(v,D)}p_{i,k}\cdot r^{2\mathcal{O}} b_{i,v-k},
\end{align*}
where $r^{2\mathcal{O}} b_{i,v-k} \in \mathbb{Z}$ for $0\leq v\leq \mathcal{O}$. We therefore wish to find a nontrivial solution in integers $p_{i,k}$ to the system of homogeneous linear equations
\begin{align*}
r^{2\mathcal{O}} p_{0,v}\cdot\mathbf{1}_{v\leq D}  + \sum_{i=1}^2 \sum_{0\leq k\leq \min(v,D)}p_{i,k}\cdot r^{2\mathcal{O}} b_{i,v-k} = 0, \ \ \ \ \ 0\leq v \leq \mathcal{O}.
\end{align*}
This is in the form to directly apply Lemma \ref{lem:siegels lemma}, but we obtain a better bound for the coefficients $p_{i,k}$ if we utilize the special form of the linear system, namely, the simple manner in which the variables $p_{0,k}$ appear. Specifically, we use Siegel's lemma only to solve the equations for $D+1 \leq v \leq \mathcal{O}$, and then we choose the coefficients $p_{0,k}$ to solve the equations with $0\leq v\leq D$. 

The system of equations with $D+1\leq v \leq \mathcal{O}$ is a system of $\mathcal{O}-D$ equations in the $2D+2$ variables $p_{1,k},p_{2,k}$. By Lemmas \ref{lem:coeffs of omega i} and \ref{lem:siegels lemma} there is a nontrivial solution to this system of equations with
\begin{align*}
|p_{i,k}| &\leq \left(3 (2D+2)r^{2\mathcal{O}} 2^\mathcal{M} (2\mathcal{M})^{\mathcal{O}} \right)^{\frac{\mathcal{O}-D}{2D+2-(\mathcal{O}-D)}} \leq \left(6(D+1) 2^\mathcal{M} (2r^2\mathcal{M})^{\mathcal{O}} \right)^{(2-\epsilon_0)\epsilon_0^{-1}}.
\end{align*}
Since $D \geq 10$, elementary calculus shows
\begin{align*}
6(D+1) = \Big(\big(6(D+1)\big)^{1/\mathcal{O}}\Big)^{\mathcal{O}} \leq \Big(\big(6(D+1)\big)^{\frac{1}{2D}}\Big)^{\mathcal{O}}\leq \left( \frac{3}{2}\right)^\mathcal{O},
\end{align*}
and therefore $|p_{i,k}| \leq 2^{(2-\epsilon_0)\epsilon_0^{-1}\mathcal{M}} (3r^2\mathcal{M})^{(2-\epsilon_0)\epsilon_0^{-1} \mathcal{O}}$ for $1\leq i \leq 2$.

We next modify the coefficients $p_{1,k},p_{2,k}$ in order to ensure we can choose $p_{0,k}$ to solve the equations for $0\leq v\leq D$. Since $\{p_{i,k}\}$ solves a system of homogeneous linear equations, so does the dilation $\{\lambda p_{i,k}\}$ for any nonzero integer $\lambda$. We take $\lambda = r^{2\mathcal{O}}$ and then change variables $r^{2\mathcal{O}} p_{i,k} \rightarrow p_{i,k}$ so that each integer $p_{1,k},p_{2,k}$, is divisible by $r^{2\mathcal{O}}$, and
\begin{align}\label{eq:bound for coeffs pik}
|p_{i,k}| &\leq 2^{(2-\epsilon_0)\epsilon_0^{-1}\mathcal{M}} (3r^4\mathcal{M})^{(2-\epsilon_0)\epsilon_0^{-1} \mathcal{O}}, \ \ \ \ \ \ i=1,2.
\end{align}
Furthermore, at least one of the coefficients $p_{i,k}$ is nonzero, and therefore at least one of $P_1$ or $P_2$ is not identically zero.

We now show there is a suitable choice of $p_{0,k}$ so that
\begin{align*}
r^{2\mathcal{O}} p_{0,v}  + \sum_{i=1}^2 \sum_{0\leq k\leq v}p_{i,k}\cdot r^{2\mathcal{O}} b_{i,v-k} = 0, \ \ \ \ \ \ \ \ 0 \leq v \leq D.
\end{align*}
Since $r^{2\mathcal{O}}$ divides each $p_{i,k}$ with $1\leq i\leq 2$, it suffices to choose
\begin{align*}
p_{0,v} &= -\sum_{i=1}^2 \sum_{0\leq k\leq v}\frac{p_{i,k}}{r^{2\mathcal{O}}}\cdot r^{2\mathcal{O}} b_{i,v-k} \in \mathbb{Z},
\end{align*}
so that by Lemma \ref{lem:coeffs of omega i} and \eqref{eq:bound for coeffs pik} we have
\begin{align*}
|p_{0,v}| &\leq 2(D+1) \cdot 2^{(2-\epsilon_0)\epsilon_0^{-1}\mathcal{M}} (3r^4\mathcal{M})^{(2-\epsilon_0)\epsilon_0^{-1} \mathcal{O}} \cdot 2^{\mathcal{M}} (2\mathcal{M})^v \\
&\leq 4^{\mathcal{M}\epsilon_0^{-1}}(4r^4\mathcal{M})^{(2-\epsilon_0)(3-\epsilon_0)\epsilon_0^{-1} D}(2\mathcal{M})^v.
\end{align*}

It remains to bound the coefficients $r_v$. We have $r_v=0$ for $v\leq \mathcal{O}$, so we may assume $v > \mathcal{O}$. Then
\begin{align*}
r_v &= \sum_{i=1}^2 \sum_{0\leq k\leq D}p_{i,k}b_{i,v-k},
\end{align*}
so by \eqref{eq:bound for coeffs pik} and Lemma \ref{lem:coeffs of omega i} again
\begin{align*}
|r_v| &\leq 4^{\mathcal{M}\epsilon_0^{-1}}(4r^4\mathcal{M})^{(2-\epsilon_0)(3-\epsilon_0)\epsilon_0^{-1} D}(2\mathcal{M})^v. \qedhere
\end{align*}
\end{proof}

\section{Independent Pad\'e polynomials}\label{sec:indep Pade polys}

We constructed Pad\'e polynomials $P_i$ in Lemma \ref{lem:initial pade polynomials} by appealing to Siegel's lemma. However, these polynomials are not immediately suitable for our purposes because we cannot guarantee that the polynomials $P_i$ are independent. In this section we introduce the polynomials $P_i^{[k]}$ and show they possess the necessary independence (see Lemma \ref{lem:full rank for square bracket polys}). The fact that the functions $\omega_i$ have simple differential equations plays a vital role.

\begin{lemma}[Differential equation for $\omega$]\label{lem:diff eq for omega funcs}
Let $\beta,r \geq 2$ be integers, and let
\begin{align*}
\omega(x) = \prod_{j=1}^{\beta-1} (1-jx)^{-1/r}.
\end{align*}
Then $\omega'(x) = A(x) \omega(x)$, where
\begin{align*}
A(x) &= \sum_{j=1}^{\beta-1} \frac{j}{r}\frac{1}{1-jx} \in \mathbb{Q}(x).
\end{align*}
\end{lemma}

Lemma \ref{lem:diff eq for omega funcs} implies $\omega_i'(x) = A_i(x) \omega_i(x)$, where
\begin{align}\label{eq:defn of A funcs}
\begin{split}
A_0(x) &:= 0, \\
A_i(x) &:= \sum_{j=1}^{\beta_i-1} \frac{j}{r}\frac{1}{1-jx}, \ \ \ \ \ \ \ \ i=1,2.
\end{split}
\end{align}
We also define the polynomial $T(x) \in \mathbb{Z}[x]$ by
\begin{align}\label{eq:defn of T poly}
T(x) := r\prod_{j=1}^{\mathcal{M}}(1-jx),
\end{align}
so that $T(x)A_i(x) \in \mathbb{Z}[x]$ for $0\leq i \leq 2$. Note that $\text{deg}(T) = \mathcal{M}$ and $\text{deg}(TA_i)\leq \mathcal{M}$.

Given polynomials $P_i$ and the function $R$ as in Lemma \ref{lem:initial pade polynomials}, we define for $k\geq 0$ the functions
\begin{align}\label{eq:defn of R square bracket}
R^{[k]} := \left(T \frac{d}{dx} \right)^k R
\end{align}
and polynomials
\begin{align}\label{eq:defn of P square bracket}
P_i^{[k]} := \left(T \left(\frac{d}{dx} + A_i \right) \right)^k P_i.
\end{align}
We then have the following lemma, which one can easily prove by induction (see also \cite[Lemma 1.4]{Nag1995}).

\begin{lemma}[Properties of $P_i^{[k]}$ and $R^{[k]}$]\label{lem:props of square bracket Pade polynomials}
Let $P_i^{[k]}$ and $R^{[k]}$ be defined as in \eqref{eq:defn of R square bracket} and \eqref{eq:defn of P square bracket}. The following are true for $k\geq 0$\textup{:}
\begin{enumerate}
\item $P_i^{[k]}(x) \in \mathbb{Z}[x]$,
\item $\textup{deg}(P_i^{[k]}) \leq D+k\mathcal{M}$,
\item $R^{[k]}(x) = \sum_{i=0}^2 P_i^{[k]}(x) \omega_i(x)$.
\end{enumerate}
\end{lemma}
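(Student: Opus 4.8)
The plan is a straightforward induction on $k$. The base case $k=0$ is immediate: $P_i^{[0]}=P_i\in\mathbb{Z}[x]$ has degree $\leq D$ by Lemma \ref{lem:initial pade polynomials}, and $R^{[0]}=R=\sum_{i=0}^2 P_i\omega_i$ by construction. For the inductive step I would assume (1)--(3) at level $k$ and deduce them at level $k+1$, the key point being the identity linking the two operators in \eqref{eq:defn of R square bracket} and \eqref{eq:defn of P square bracket}.

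Concretely, using the Leibniz rule and the relations $\omega_i'=A_i\omega_i$ from \eqref{eq:defn of A funcs}, I would compute
\begin{align*}
R^{[k+1]} = T\frac{d}{dx}R^{[k]} = T\frac{d}{dx}\sum_{i=0}^2 P_i^{[k]}\omega_i = \sum_{i=0}^2\Big(T\big(P_i^{[k]}\big)' + TA_i\,P_i^{[k]}\Big)\omega_i = \sum_{i=0}^2\Big(T\big(\tfrac{d}{dx}+A_i\big)P_i^{[k]}\Big)\omega_i,
\end{align*}
and then note that $P_i^{[k+1]}=\big(T(\tfrac{d}{dx}+A_i)\big)^{k+1}P_i=T\big(\tfrac{d}{dx}+A_i\big)P_i^{[k]}$ by \eqref{eq:defn of P square bracket}, which gives (3) at level $k+1$. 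Property (1) then follows from the expansion $P_i^{[k+1]}=T\cdot\big(P_i^{[k]}\big)'+(TA_i)\cdot P_i^{[k]}$: the first summand lies in $\mathbb{Z}[x]$ since $T\in\mathbb{Z}[x]$ and $P_i^{[k]}\in\mathbb{Z}[x]$ by the inductive hypothesis, and the second lies in $\mathbb{Z}[x]$ since $TA_i\in\mathbb{Z}[x]$ by the choice of $T$ in \eqref{eq:defn of T poly}. For (2) I would bound $\textup{deg}\big(T(P_i^{[k]})'\big)\leq \mathcal{M}+(D+k\mathcal{M})-1$ and $\textup{deg}\big((TA_i)P_i^{[k]}\big)\leq \mathcal{M}+(D+k\mathcal{M})$, using $\textup{deg}(T)=\mathcal{M}$, $\textup{deg}(TA_i)\leq\mathcal{M}$, and the inductive bound $\textup{deg}(P_i^{[k]})\leq D+k\mathcal{M}$; the larger of these is $D+(k+1)\mathcal{M}$, as wanted.

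I do not anticipate a genuine obstacle: the only thing that needs care is the bookkeeping that applying $T\frac{d}{dx}$ to the whole form $R$ amounts to applying $T(\frac{d}{dx}+A_i)$ to each $P_i$ separately, which is precisely what the simple differential equations $\omega_i'=A_i\omega_i$ deliver. Once that is in place, integrality and the degree count are just consequences of the inductive hypothesis and the fact that $T$ and $TA_i$ are integer polynomials of degree at most $\mathcal{M}$. The case $i=0$ is consistent since $A_0=0$ and $\omega_0=1$ make everything trivial there.
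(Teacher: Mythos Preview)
Your proposal is correct and matches the paper's approach: the paper simply states that the lemma ``can easily prove[d] by induction'' and gives no further details, so your write-up is precisely the fleshing-out of that induction. The key identity $T\frac{d}{dx}(P_i^{[k]}\omega_i)=\big(T(\tfrac{d}{dx}+A_i)P_i^{[k]}\big)\omega_i$ via $\omega_i'=A_i\omega_i$, together with $T,TA_i\in\mathbb{Z}[x]$ and $\deg(T),\deg(TA_i)\leq\mathcal{M}$, is exactly what is needed.
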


In preparation for the study of a certain determinant (see \eqref{eq:defn of det poly} below), we need a linear independence result.

\begin{lemma}[Linear independence of $\omega$ functions]\label{lem:omega funcs are lin indep}
Suppose $U,V,W \in \mathbb{C}[x]$ are such that
\begin{align*}
U(x)\omega_0(x) + V(x)\omega_1(x) + W(x)\omega_2(x) = 0.
\end{align*}
Then $U=V=W=0$.
\end{lemma}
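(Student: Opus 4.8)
The plan is to argue by contradiction: suppose there is a nontrivial relation $U\omega_0 + V\omega_1 + W\omega_2 = 0$ with $U,V,W \in \mathbb{C}[x]$ not all zero, and derive a contradiction from the fact that $\omega_1$ and $\omega_2$ have branch-point behaviour (fractional exponents) that polynomials cannot match. Since $\omega_0 = 1$, the relation reads $V(x)\omega_1(x) + W(x)\omega_2(x) = -U(x)$. First I would observe that $V$ and $W$ cannot both be zero (else $U=0$ too), so we genuinely have at least one of $\omega_1,\omega_2$ appearing. Recall $\omega_i(x) = \prod_{j=1}^{\beta_i-1}(1-jx)^{-1/r}$, so $\omega_1$ has singularities exactly at the points $x = 1/j$ for $1\le j\le \beta_1 - 1$, and $\omega_2$ at $x=1/j$ for $1\le j \le \beta_2 - 1$; each such singularity is an algebraic branch point of order $r$ (the local exponent is $-1/r \notin \mathbb{Z}$), since $r\ge 2$.

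The key step is to isolate a branch point that distinguishes the functions. Since $\beta_1 < \beta_2$, the point $x_0 := 1/(\beta_2 - 1)$ is a branch point of $\omega_2$ but a regular point of $\omega_1$ (as $\beta_2 - 1 \ge \beta_1 > \beta_1 - 1$, so $j=\beta_2-1$ does not occur in the product defining $\omega_1$). Analytically continuing the identity $V\omega_1 + W\omega_2 = -U$ around a small loop based near $x_0$, the function $\omega_2$ is multiplied by $\zeta = e^{-2\pi i/r} \ne 1$ while $\omega_1$, $U$, $V$, $W$ (being single-valued near $x_0$) return to themselves. Subtracting the continued identity from the original gives $W(x)(\zeta - 1)\omega_2(x) = 0$ in a neighbourhood of $x_0$, hence $W \equiv 0$ (as $\zeta \ne 1$ and $\omega_2$ is not identically zero). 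The relation collapses to $V(x)\omega_1(x) = -U(x)$. Now I repeat the argument at $x_1 := 1/(\beta_1-1)$, which is a branch point of $\omega_1$: looping around it multiplies $\omega_1$ by $\zeta \ne 1$ while $U,V$ stay fixed, forcing $V\equiv 0$, and then $U \equiv 0$ as well — contradicting nontriviality. (One can phrase this monodromy argument purely algebraically via Puiseux expansions at $x_0$ and $x_1$ if one prefers to avoid analytic continuation, but the substance is the same.)

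I do not anticipate a serious obstacle here; the only point requiring a little care is the bookkeeping that $x_0 = 1/(\beta_2-1)$ is genuinely a regular point of $\omega_1$ and $x_1 = 1/(\beta_1-1)$ is a branch point of $\omega_1$ — both follow immediately from $\beta_1 < \beta_2$ and $\beta_1 \ge r \ge 2$ (so $\beta_1 - 1 \ge 1$ and the product defining $\omega_1$ is nonempty). A minor alternative, if one wants to sidestep even choosing the right branch point, is to note that $\omega_1$ and $\omega_2$ are linearly independent over $\mathbb{C}(x)$ precisely because $\omega_2/\omega_1 = \prod_{j=\beta_1}^{\beta_2-1}(1-jx)^{-1/r}$ is not a rational function (its branch points at $x = 1/j$, $\beta_1 \le j \le \beta_2-1$, are non-removable since $r\nmid 1$), and $\omega_1$ itself is not rational; this directly yields that no relation $V\omega_1 + W\omega_2 \in \mathbb{C}[x]$ can hold nontrivially once one also checks $1 \notin \mathbb{C}(x)\omega_1 + \mathbb{C}(x)\omega_2$ in a compatible way — but the monodromy argument above is cleaner and self-contained.
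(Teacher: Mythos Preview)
Your monodromy argument is correct and complete. The paper takes a genuinely different route: rather than analytically continuing around branch points, it works algebraically via the differential equations $\omega_i' = A_i\omega_i$. After reducing to a relation $\omega_2 = \gamma + \delta\omega_1$ with $\gamma,\delta \in \mathbb{C}(x)$ and $\delta\neq 0$, the paper differentiates and compares with the relation multiplied by $A_2$, obtaining $\gamma A_2 - \gamma' = (\delta' + \delta(A_1-A_2))\omega_1$; it then observes that $\delta'/\delta$ has only integer residues whereas $A_1-A_2$ has residues $1/r \notin \mathbb{Z}$, so the coefficient of $\omega_1$ is nonzero, forcing $\omega_1 \in \mathbb{C}(x)$, which is ruled out by an $r$th-power divisibility argument. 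Your approach is arguably more direct and avoids introducing the $A_i$; the paper's approach has the virtue of staying entirely within $\mathbb{C}(x)$-algebra and of reusing the differential-equation apparatus (the functions $A_i$ and the operator $T\frac{d}{dx}$) that drives the rest of Section~\ref{sec:indep Pade polys}. Both proofs ultimately exploit the same obstruction --- that the local exponent $-1/r$ is not an integer --- but yours detects it via monodromy while the paper detects it via residues of logarithmic derivatives.
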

\begin{proof}
Assume for contradiction that $U\omega_0 + V\omega_1 + W\omega_2 = 0$ with not all of $U,V,W$ equal to zero. Suppose first that $W = 0$ but $UV \neq 0$. By renaming we find that $\omega_1 = \frac{U}{V} \in \mathbb{C}(x)$. Taking $r$th powers and rearranging gives
\begin{align*}
U(x)^r \prod_{j=1}^{\beta_1-1}(1-jx) = V(x)^r,
\end{align*}
but this is a contradiction, since $1-x$ divides the left-hand side with order $\equiv 1 \pmod{r}$, but divides the right-hand side with order $\equiv 0 \pmod{r}$.

We may therefore assume that $W\neq 0$. Then
\begin{align}\label{eq:false linear relation}
\omega_2 = \gamma + \delta \omega_1,
\end{align}
with $\gamma, \delta \in \mathbb{C}(x)$ not both zero. By the argument in the case $W=0$ above we must have $\delta \neq 0$. Multiplying \eqref{eq:false linear relation} by $A_2$ (recall \eqref{eq:defn of A funcs}) gives $A_2 \omega_2 = \gamma A_2 + \delta A_2 \omega_1$. On the other hand, differentiating \eqref{eq:false linear relation} via Lemma \ref{lem:diff eq for omega funcs} gives
\begin{align*}
A_2 \omega_2 = \gamma' + (\delta' + \delta A_1)\omega_1,
\end{align*}
and equating the two expressions for $A_2 \omega_2$ yields $\gamma A_2 + \delta A_2 \omega_1 = \gamma' + (\delta' + \delta A_1)\omega_1$, or
\begin{align}\label{eq:implies omega 1 is rational}
\gamma A_2 - \gamma' &= (\delta' + \delta (A_1 - A_2)) \omega_1.
\end{align}
It is not possible that $\delta' + \delta (A_1 - A_2) = 0$, since $\delta'/\delta$ is an integer linear combination of rational functions $(x-\zeta)^{-1}, \zeta \in \mathbb{C}$, but $A_1 - A_2$ is not. Hence $\delta' + \delta (A_1 - A_2) \neq 0$, so dividing through in \eqref{eq:implies omega 1 is rational} implies $\omega_1 \in \mathbb{C}(x)$, but we have already seen this is impossible.
\end{proof}

We introduce the determinant polynomial $\Delta(x) \in \mathbb{Z}[x]$, which is defined as
\begin{align}\label{eq:defn of det poly}
\Delta := \text{det}
\begin{pmatrix}
P_0^{[0]} & P_1^{[0]} & P_2^{[0]} \\
P_0^{[1]} & P_1^{[1]} & P_2^{[1]} \\
P_0^{[2]} & P_1^{[2]} & P_2^{[2]}
\end{pmatrix}.
\end{align}
Observe that the first row of the matrix here is given by the initial Pad\'e polynomials constructed in Lemma \ref{lem:initial pade polynomials}. We have the following important result, due to Siegel.

\begin{lemma}[Determinant polynomial is nonzero]\label{lem:det poly not identically zero}
Let $P_i^{[k]}$ be defined as in \eqref{eq:defn of P square bracket} and let $\Delta$ be defined as in \eqref{eq:defn of det poly}. Assume that $P_i(x)\neq 0$ for each $0\leq i \leq 2$. Then $\Delta(x)$ is not identically zero.
\end{lemma}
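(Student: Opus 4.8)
The plan is to argue by contradiction: suppose $\Delta \equiv 0$. Then the three rows of the matrix in \eqref{eq:defn of det poly} are linearly dependent over $\mathbb{C}(x)$, so there exist rational functions $c_0, c_1, c_2$, not all zero, with $\sum_k c_k(x) P_i^{[k]}(x) = 0$ for each $i \in \{0,1,2\}$. Multiplying by $\omega_i$ and summing over $i$, and using part (3) of Lemma \ref{lem:props of square bracket Pade polynomials}, this gives $\sum_k c_k(x) R^{[k]}(x) = 0$, i.e. a nontrivial $\mathbb{C}(x)$-linear relation among $R = R^{[0]}$, $R^{[1]}$, and $R^{[2]}$. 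The strategy is to derive a contradiction from the high order of vanishing of $R$ at $x = 0$ together with the fact that applying $(T\frac{d}{dx})$ does not lower the order of vanishing by more than a controlled amount — or, better, to follow Siegel's classical argument directly.

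Concretely, I would proceed as follows. First, observe that $R^{[k]} = (T\frac{d}{dx})^k R$ and that $R = \sum_{v > \mathcal{O}} r_v x^v$ has $\operatorname{ord}_{x=0} R \geq \mathcal{O}+1 = \lfloor(3-\epsilon_0)D\rfloor + 1$, whereas each $P_i^{[k]}$ has degree $\leq D + k\mathcal{M}$ by Lemma \ref{lem:props of square bracket Pade polynomials}(2). Next, the key structural input: since $1, \omega_1, \omega_2$ are linearly independent over $\mathbb{C}(x)$ (Lemma \ref{lem:omega funcs are lin indep}) and $\omega_1, \omega_2$ satisfy the first-order equations $\omega_i' = A_i \omega_i$, the functions $R, R^{[1]}, R^{[2]}$ are linearly independent over $\mathbb{C}(x)$ \emph{unless} the vectors $(P_0^{[k]}, P_1^{[k]}, P_2^{[k]})$ for $k = 0,1,2$ are linearly dependent over $\mathbb{C}(x)$ — but that dependence is exactly what $\Delta \equiv 0$ asserts, so I must rule it out another way. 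The clean route is Siegel's: consider the $\mathbb{C}(x)$-vector space spanned by $R$ and its images under the operator $\mathcal{D} := T\frac{d}{dx}$. If this space is finite-dimensional, say of dimension $d \leq 2$, then $R$ satisfies a linear ODE $\mathcal{D}^d R + g_{d-1}\mathcal{D}^{d-1}R + \cdots + g_0 R = 0$ with $g_j \in \mathbb{C}(x)$. One then shows this forces $R$ to be, up to the algebraic factors $\omega_i$, too constrained: expand each $P_i^{[k]}$ via \eqref{eq:defn of P square bracket} as $P_i^{[k]} = (T(\frac{d}{dx}+A_i))^k P_i$, and note that the relation $\sum_k c_k P_i^{[k]} = 0$ for all $i$ means the operator $L := \sum_k c_k (T(\frac{d}{dx}+A_i))^k$ kills $P_i$ for each $i$ simultaneously. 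Comparing leading behaviour (degrees) and the order of vanishing at $x=0$ of $R = \sum_i P_i \omega_i$ against what the relation permits yields the contradiction.

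The cleanest packaging, which I expect the authors use, is this: if $\Delta \equiv 0$ then $R, R^{[1]}, R^{[2]}$ span a $\mathbb{C}(x)$-space of dimension $\leq 2$, hence (since applying $\mathcal D$ stays in the span) $R$ satisfies an order-$\leq 2$ linear differential equation over $\mathbb{C}(x)$; but $R = \omega_0 \cdot P_0 + \omega_1 P_1 + \omega_2 P_2$ with $P_0, P_1, P_2$ all nonzero, and $\omega_0 = 1, \omega_1, \omega_2$ are $\mathbb{C}(x)$-linearly independent solutions built from distinct algebraic functions, so any $\mathbb{C}(x)$-linear ODE annihilating $R$ must have order $\geq 3$ — because $1, \omega_1, \omega_2$ are themselves $\mathbb{C}(x)$-linearly independent and a Wronskian-type argument (using $\omega_i' = A_i\omega_i$ and $A_0, A_1, A_2$ pairwise "incompatible" in the sense used in the proof of Lemma \ref{lem:omega funcs are lin indep}) shows the three functions $R, R^{[1]}, R^{[2]}$ have nonvanishing Wronskian. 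This contradiction finishes the proof. The main obstacle is making the last step precise: one must carefully verify that the $3\times 3$ Wronskian-type determinant built from $R, R^{[1]}, R^{[2]}$ (equivalently, that $\Delta$ times a nonzero factor equals a Wronskian of $1, \omega_1, \omega_2$ weighted by $P_0 P_1 P_2$ and a power of $T$) does not vanish identically; this is where the hypothesis $P_i \neq 0$ for all $i$ and the structural facts about the $A_i$ from Lemma \ref{lem:omega funcs are lin indep} are essential, and where a direct induction (as in \cite{Nag1995}, \cite{Sieg1949}) expressing $\Delta$ in closed form is the safest path.
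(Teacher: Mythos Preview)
Your proposal starts correctly and even touches the key observation, but then veers away from it and lands in a circular argument. The step you call the ``cleanest packaging'' --- that $R$ satisfies a linear ODE over $\mathbb{C}(x)$ of order $\leq 2$, and that this is impossible because $1,\omega_1,\omega_2$ are $\mathbb{C}(x)$-linearly independent --- does not work: a $\mathbb{C}(x)$-combination of three independent functions can perfectly well satisfy a lower-order ODE, so the claimed implication ``order $\geq 3$'' is unjustified. Your fallback, computing the Wronskian of $R,R^{[1]},R^{[2]}$, is exactly $\Delta$ times a nonzero factor, so showing it is nonzero is the statement you are trying to prove. You acknowledge this circularity yourself at the end.

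The missing idea, which you actually brush past, is to exploit that the row relation holds \emph{for each $i$ separately}. Since $\big(T\frac{d}{dx}\big)^k (P_i\omega_i) = P_i^{[k]}\omega_i$, the identity $\sum_k B_k P_i^{[k]}=0$ says that the single, $i$-independent operator $\sum_k B_k\big(T\frac{d}{dx}\big)^k$ of order $\mu\leq 2$ annihilates each of the three nonzero functions $P_0\omega_0,P_1\omega_1,P_2\omega_2$. The solution space of a homogeneous linear ODE of order $\mu$ has dimension $\mu<3$ over $\mathbb{C}$, so these three functions are linearly dependent over the \emph{constants}: $\sum_i c_i P_i\omega_i = 0$ with some $c_i\neq 0$. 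Now Lemma \ref{lem:omega funcs are lin indep} forces $c_iP_i=0$ for every $i$, contradicting $P_i\neq 0$. This is precisely the paper's argument; the crucial move is passing from an ODE for $R$ to the same ODE for each summand $P_i\omega_i$, which is what yields dependence over $\mathbb{C}$ rather than over $\mathbb{C}(x)$.
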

\begin{proof}
This is \cite[Chapter 2, Lemma 4]{Sieg1949}. Suppose for contradiction that each polynomial $P_i(x)$ is nonzero, but that $\Delta(x) = 0$. We may consider the entries of the matrix in the definition of $\Delta$ as being in the field $\mathbb{Q}(x)$, and then $\Delta = 0$ implies there is a nontrivial linear relation between the rows. By multiplying through to clear denominators, it follows that there exists $\mu \in \{1,2\}$ and polynomials $B_0,\ldots,B_\mu \in \mathbb{Z}[x]$ with $B_\mu \neq 0$ such that
\begin{align*}
\sum_{k=0}^\mu B_k(x) P_{i}^{[k]}(x) = 0
\end{align*}
for each $0\leq i \leq 2$. Lemma \ref{lem:props of square bracket Pade polynomials} implies $\sum_{k=0}^\mu B_k(x) R^{[k]}(x) = 0$,
since
\begin{align*}
\sum_{k=0}^\mu B_k(x) R^{[k]}(x) = \sum_{k=0}^\mu B_k(x) \sum_{i=0}^2 P_i^{[k]}(x) \omega_i(x) = \sum_{i=0}^2 \omega_i(x) \sum_{k=0}^\mu B_k(x) P_{i}^{[k]}(x) = 0.
\end{align*}
The condition $\sum_{k=0}^\mu B_k(x) R^{[k]}(x) = 0$
is equivalent to
\begin{align*}
\sum_{j=0}^\mu C_j(x) R^{(\mu-j)}(x) = 0,
\end{align*}
where $C_0(x) = B_\mu(x)T(x)^{\mu} \neq 0$ and the other $C_j$ are polynomials. Each of the three functions $P_i(x) \omega_i(x)$ is nonzero, and since $\left( T \frac{d}{dx}\right)^k (P_i \omega_i) = P_i^{[k]}\omega_i$, each function is a solution to the homogeneous linear differential equation
\begin{align*}
\sum_{k=0}^\mu B_k(x) \left( T \frac{d}{dx}\right)^k =  \sum_{j=0}^\mu C_j(x) \left(\frac{d}{dx} \right)^{\mu-j}=0
\end{align*}
of order $\mu < 3$. Therefore, there must exist a linear relation
\begin{align*}
\sum_{i=0}^2 c_i P_i(x) \omega_i(x) = 0,
\end{align*}
where the $c_i$ are complex constants, not all of which are zero (since the solutions to a homogeneous linear differential equation of order $\mu$ form a vector space of dimension $\mu$). By Lemma \ref{lem:omega funcs are lin indep} this implies $c_iP_i = 0$ for each $i$, and this is a contradiction.
\end{proof}

\begin{remark}
We can use Siegel's simple and effective argument to show $\Delta \neq 0$ since the functions $\omega_i$ have simple differential equations. If the differential equations were more complicated we might need more complicated arguments, such as those of Shidlovskii \cite[Chapter 3]{Shi1989}, which are more difficult to make effective.
\end{remark}

\begin{lemma}[$P_i^{[k]}(\alpha)$ matrix has full rank]\label{lem:full rank for square bracket polys}
Let $P_i^{[k]}$ be defined as in \eqref{eq:defn of P square bracket} and let $\Delta$ be defined as in \eqref{eq:defn of det poly}. Let $\alpha \in \mathbb{C}\backslash\{0\}$ be such that $T(\alpha)\neq 0$. Assume $D > \frac{\mathcal{M} + 2}{1-\epsilon_0}$.  Then $\Delta \neq 0$. 

If $\Delta \neq 0$, write $a=\textup{ord}_{x=\alpha}\Delta$ ($a$ is a nonnegative integer). Then the matrix
\begin{align*}
(P_i^{[k]}(\alpha))_{\substack{0\leq k \leq a+2 \\ 0\leq i \leq 2}}
\end{align*}
has rank three.
\end{lemma}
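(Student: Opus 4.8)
The plan is to separate the two assertions. For the first claim, that $\Delta\neq 0$, the key observation is that $\Delta\neq 0$ by Lemma~\ref{lem:det poly not identically zero} as soon as we know $P_i(x)\neq 0$ for each $0\le i\le 2$. We already know from Lemma~\ref{lem:initial pade polynomials}(1) that at least one $P_i$ is nonzero, and from the proof of Lemma~\ref{lem:initial pade polynomials} that at least one of $P_1,P_2$ is nonzero. The remaining worry is that $P_0$ could be identically zero, or that one of $P_1,P_2$ could be. The way to rule this out is a degree-of-vanishing count: $R=\sum_i P_i\omega_i$ vanishes to order $\ge\mathcal O=\lfloor(3-\epsilon_0)D\rfloor$ at $x=0$, while if some $P_i$ were zero then $R$ would be a combination of only two of the $\omega_i$ (or one of them). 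The Wronskian-type / order-of-vanishing estimate shows that a nonzero combination $V\omega_1+W\omega_2$ with $\deg V,\deg W\le D$ can vanish at $x=0$ to order at most $2D+1$, and similarly $U\omega_0+W\omega_2$ or $U\omega_0+V\omega_1$ can vanish to order at most $2D+1$; since $\mathcal O\ge(3-\epsilon_0)D-1>2D+1$ exactly when $D>\frac{\mathcal M+2}{1-\epsilon_0}$ (indeed $(1-\epsilon_0)D>2>1$ suffices here, and the stated hypothesis is more than enough), we get a contradiction, so all three $P_i$ are nonzero, whence Lemma~\ref{lem:det poly not identically zero} gives $\Delta\neq0$. (The one-term case $R=P_i\omega_i$ is even easier: $\omega_0=1$ forces $\mathcal O\le D$, and for $i=1,2$ the function $\omega_i$ is a unit at $x=0$ so the order of vanishing is $\le D$.)

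For the second claim I would argue as follows. Write $a=\operatorname{ord}_{x=\alpha}\Delta$. Consider the $3\times 3$ matrices $M_k:=(P_i^{[k+j]}(\alpha))_{0\le i\le 2,\,0\le j\le 2}$ for $k=0,1,\dots$; these are the ``windows'' of three consecutive rows in the infinite matrix $(P_i^{[k]}(\alpha))$. The determinant of $M_0$ is $\Delta(\alpha)$. The crucial identity, which follows by induction from the definition $P_i^{[k+1]}=T(\frac{d}{dx}+A_i)P_i^{[k]}$, is that $\det M_{k+1}$ differs from the derivative of $\det(P_i^{[k+j]}(x))_{i,j}$ by lower-order terms coming from the $A_i$'s—more precisely, one shows $\frac{d}{dx}\det(P_i^{[k+j]})=$ (something) $\cdot T^{-1}\det M_{k+1}+$ (combination of rows that repeat, hence $0$). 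The cleanest route is: let $\Delta_k(x):=\det(P_i^{[k+j]}(x))_{0\le i\le2,\,0\le j\le2}$, so $\Delta_0=\Delta$; then Siegel's computation (the same one used in Lemma~\ref{lem:det poly not identically zero}, or \cite[Chapter 2, Lemma 4]{Sieg1949}) gives a differential relation of the shape $T(x)\,\Delta_k'(x)=c_k\,\Delta_{k+1}(x)+(\text{terms that are }0)$, so that $\operatorname{ord}_{x=\alpha}\Delta_{k+1}\ge\operatorname{ord}_{x=\alpha}\Delta_k-1$ whenever $T(\alpha)\neq0$, and in fact equality-type control holds. Consequently $\operatorname{ord}_{x=\alpha}\Delta_a\le a-a=0$ is forced: more carefully, since $\operatorname{ord}_{x=\alpha}\Delta_0=a$ and each step drops the order by at most $1$, but $\Delta_k\not\equiv0$ for all $k$ (by the same linear-independence argument as before applied to the shifted rows, using Lemma~\ref{lem:omega funcs are lin indep} and that $\omega_i'=A_i\omega_i$), there must be some $k\le a$ with $\Delta_k(\alpha)\neq0$; that $\Delta_k(\alpha)\neq0$ says the rows indexed $k,k+1,k+2$ of $(P_i^{[k]}(\alpha))$ are linearly independent, and since $0\le k\le a$ these three rows all have index $\le a+2$, so the matrix $(P_i^{[k]}(\alpha))_{0\le k\le a+2,\,0\le i\le2}$ has rank $3$.

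To make the ``order drops by at most one at each step'' rigorous I would establish, by induction on $k$, the identity
\begin{align*}
T(x)^{?}\,\Delta_{k}(x)=\Big(T(x)\frac{d}{dx}\Big)\big(\text{previous determinant}\big)+\big(\text{determinants with a repeated row}\big),
\end{align*}
using multilinearity of the determinant and the product rule; the repeated-row determinants vanish, so one is left with an honest relation between $\Delta_{k+1}$ and $T\cdot\Delta_k'$ up to a nonzero polynomial factor that is itself a power of $T$ (hence nonvanishing at $\alpha$). Since $\operatorname{ord}_{x=\alpha}(T\Delta_k')\ge\operatorname{ord}_{x=\alpha}\Delta_k-1$, this yields the desired inequality on orders. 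I expect this determinant-differentiation bookkeeping to be the main obstacle: one has to keep careful track of which auxiliary determinants vanish and of the exact power of $T$ appearing, though conceptually it is just the classical Siegel/Wronskian manipulation already invoked in Lemma~\ref{lem:det poly not identically zero}. The non-vanishing $\Delta_k\not\equiv0$ for each $k$ is handled exactly as in Lemma~\ref{lem:det poly not identically zero}: a nontrivial relation among the rows of $\Delta_k$ would, via Lemma~\ref{lem:props of square bracket Pade polynomials}, produce a low-order linear differential equation satisfied by all three nonzero functions $P_i\omega_i$, forcing a constant-coefficient relation $\sum c_iP_i\omega_i=0$, which Lemma~\ref{lem:omega funcs are lin indep} rules out.
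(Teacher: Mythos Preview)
Your overall shape for the first assertion is right, but the key quantitative step is not justified. You assert that a nonzero combination $V\omega_1+W\omega_2$ (or $U\omega_0+V\omega_1$, etc.) with $\deg V,\deg W\le D$ vanishes at $x=0$ to order at most $2D+1$. This is not automatic: for generic holomorphic $\omega_i$ it can fail badly (take $\omega_1=1$, $\omega_2=1+x^{100}$, $D=0$). The paper does not attempt such a bare bound. Instead, assuming (after relabeling) $P_0P_1\neq 0$ and $P_2=0$, it forms the $2\times 2$ determinant $\widetilde{\Delta}=P_0P_1^{[1]}-P_1P_0^{[1]}$, checks $\widetilde{\Delta}\not\equiv 0$ exactly as in Lemma~\ref{lem:det poly not identically zero}, and uses the identity $\widetilde{\Delta}\,\omega_0 = P_1^{[1]}R^{[0]}-P_1R^{[1]}$. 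The left side has order at most $\deg\widetilde{\Delta}\le 2D+\mathcal{M}$, while the right side has order $\ge \mathcal{O}-1\ge(3-\epsilon_0)D-2$. Comparing these gives a contradiction precisely when $D>\frac{\mathcal{M}+2}{1-\epsilon_0}$; this is where the $\mathcal{M}$ in the hypothesis actually enters, not a coincidence you can discard.

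For the second assertion your sliding-window argument does not go through as written. Differentiating $\Delta_k=\det(P_i^{[k+j]})_{0\le j\le 2}$ by the product rule and using $T(P_i^{[m]})'=P_i^{[m+1]}-TA_iP_i^{[m]}$ produces two obstructions. First, the ``shifted'' piece in the row $j=2$ yields a determinant with rows indexed $k,\,k+1,\,k+3$, not $k+1,\,k+2,\,k+3$, so you do not get $\Delta_{k+1}$. Second, the correction row $(TA_iP_i^{[k+j]})_i$ is \emph{not} a scalar multiple of the original row because $A_i$ depends on $i$, so these terms are not ``determinants with a repeated row'' and do not vanish. Thus no clean relation $T\Delta_k'=c_k\Delta_{k+1}$ holds, and your order-of-vanishing bookkeeping (which anyway wavers between ``drops by at most one'' and ``drops by at least one'') collapses. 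The paper avoids all of this by a different device: from the cofactor identity $\Delta\,\omega_k=\sum_{\ell=0}^{2}\Delta_{k,\ell}R^{[\ell]}$ one applies $(T\tfrac{d}{dx})^J$ and shows inductively that
\[
T(x)^J\Delta^{(J)}(x)\,\omega_k(x)+\sum_{j<J}\Delta^{(j)}(x)L_{J,k,j}(x)=\sum_{\ell=0}^{J+2}M_{J,k,\ell}(x)R^{[\ell]}(x),
\]
then compares coefficients of $\omega_0,\omega_1,\omega_2$ via Lemma~\ref{lem:omega funcs are lin indep} to replace the $\omega_i$ by free variables $y_i$. Setting $J=a$ and $x=\alpha$ (so $\Delta^{(j)}(\alpha)=0$ for $j<a$ and $T(\alpha)^a\Delta^{(a)}(\alpha)\neq 0$) expresses each $y_k$ as a linear combination of the forms $\sum_i P_i^{[\ell]}(\alpha)y_i$ for $0\le\ell\le a+2$, which is exactly rank three.
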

\begin{proof}
The result is essentially that of \cite[Chapter 2, Lemma 5]{Sieg1949}. By Lemma \ref{lem:initial pade polynomials}, at least one of the polynomials $P_i(x) = P_i^{[0]}(x)$, $0\leq i \leq 2$, is nonzero. It cannot be that exactly one of the polynomials $P_i$ is nonzero, since in that case $R(x) = P(x) \omega(x)$, but $R(x)$ is a nonzero function vanishing at zero to order $\geq \mathcal{O} \geq (3-\epsilon_0)D - 1$ (Lemma \ref{lem:initial pade polynomials}), and $P(x)\omega(x)$ vanishes at zero to order at most $D$ (observe that $\omega_i(0) \neq 0$ for $0\leq i \leq 2$). Hence we obtain a contradiction since $D\geq 1$ and $\epsilon_0 < 1$.

Now suppose for contradiction that exactly two of the polynomials $P_i$ are nonzero. By temporarily relabeling we may assume that $P_0P_1\neq 0$ and $P_2 = 0$. Following the proof of Lemma \ref{lem:det poly not identically zero}, we find that
\begin{align*}
\widetilde{\Delta} = \text{det}
\begin{pmatrix}
P_0 & P_1 \\
P_0^{[1]} & P_1^{[1]}
\end{pmatrix}
\neq 0,
\end{align*}
so that $\widetilde{\Delta}$ is a nonzero polynomial with degree $\leq 2D + \mathcal{M}$. We easily check the identity
\begin{align}\label{eq:tilde delta identity}
\widetilde{\Delta} \omega_0 &= P_1^{[1]} (P_0\omega_0 + P_1 \omega_1) - P_1 (P_0^{[1]}\omega_0 + P_1^{[1]}\omega_1) = P_1^{[1]} R^{[0]} - P_1 R^{[1]},
\end{align}
and observe that the left-hand side vanishes at $x=0$ to order $\leq 2D + \mathcal{M}$, while by Lemmas \ref{lem:initial pade polynomials} and \ref{lem:props of square bracket Pade polynomials} the right-hand side vanishes at $x=0$ to order $\geq \mathcal{O} - 1 \geq (3-\epsilon_0)D - 2$. This is a contradiction if $D > \frac{\mathcal{M} + 2}{1-\epsilon_0}$. It follows that each polynomial $P_0,P_1,P_2$ is nonzero, and by Lemma \ref{lem:det poly not identically zero} we have $\Delta \neq 0$.

 In the spirit of \eqref{eq:tilde delta identity}, we have the identity
\begin{align*}
\Delta \omega_k &= \sum_{\ell=0}^2\Delta_{k,\ell}R^{[\ell]},
\end{align*}
where $\Delta_{k,\ell}$ is the $(k,\ell)$-cofactor of the matrix in the definition \eqref{eq:defn of det poly} of $\Delta$. Now apply $(T\frac{d}{dx})^J$ to both sides of this identity. It follows by induction that
\begin{align*}
T(x)^J \Delta^{(J)}(x) \omega_k(x) + \sum_{j=0}^{J-1}\Delta^{(j)}(x) L_{J,k,j}(x) = \sum_{\ell=0}^{J+2} M_{J,k,\ell}(x) R^{[\ell]}(x),
\end{align*}
where the $L_{J,k,j}$ are linear forms in $\omega_0,\omega_1,\omega_2$ with polynomial coefficients, and the $M_{J,k,\ell}$ are polynomials. By comparing the coefficients for $\omega_i$ on both sides and applying Lemma \ref{lem:omega funcs are lin indep}, we deduce the stronger identity
\begin{align*}
T(x)^J \Delta^{(J)}(x) y_k + \sum_{j=0}^{J-1}\Delta^{(j)}(x) L_{J,k,j} = \sum_{\ell=0}^{J+2} M_{J,k,\ell}(x) \left(P_{0}^{[\ell]}(x)y_0 + P_{1}^{[\ell]}(x)y_1 + P_{2}^{[\ell]}(x)y_2 \right),
\end{align*}
where now the $L_{J,k,j}$ are linear forms in the independent variables $y_0,y_1,y_2$. Let $\alpha$ be a nonzero complex number with $T(\alpha)\neq 0$, and let $a = \text{ord}_{x=\alpha}\Delta$. Taking $J=a$ yields
\begin{align*}
T(\alpha)^a\Delta^{(a)}(\alpha) y_k = \sum_{\ell=0}^{a+2} M_{a,k,\ell}(\alpha) \left(P_{0}^{[\ell]}(\alpha)y_0 + P_{1}^{[\ell]}(\alpha)y_1 + P_{2}^{[\ell]}(\alpha)y_2 \right).
\end{align*}
By assumption we have $T(\alpha)\neq 0$ and $\Delta^{(a)}(\alpha) \neq 0$, and therefore each variable $y_k$ is a linear combination of the $a+3$ linear forms $P_{0}^{[\ell]}(\alpha)y_0 + P_{1}^{[\ell]}(\alpha)y_1 + P_{2}^{[\ell]}(\alpha)y_2$.
\end{proof}

\section{Alternate Pad\'e polynomials}\label{sec:alternate pade polys}

Lemma \ref{lem:full rank for square bracket polys} shows the polynomials $P_i^{[k]}$ are suitably independent, but it is cumbersome to bound the size of $P_i^{[k]}(\alpha), \alpha \in \mathbb{C}$. In this section we introduce the polynomials $P_i^{\langle k \rangle}$, also derived from the initial Pad\'e polynomials of Lemma \ref{lem:initial pade polynomials}, for which it is easier to control the size of $P_i^{\langle k \rangle}(\alpha)$. The polynomials $P_i^{\langle k \rangle}$ and $P_i^{[ k ]}$ are related by a nonsingular transformation, so the new polynomials $P_i^{\langle k \rangle}$ inherit the independence of the polynomials $P_i^{[ k ]}$. The rational numbers $p_{i,j}$ of Proposition \ref{prop:existence of good polynomials} arise from evaluating the polynomials $P_i^{\langle k \rangle}(x)$ at $x=1/n$ for some suitably chosen $n$ (see Lemma \ref{lem:full rank for angle bracket polys} and Lemma \ref{lem:rationals coming from Pade polynomials}).

Let $P_i, 0\leq i \leq 2$, and $R$ be as in Lemma \ref{lem:initial pade polynomials}. We recall the definitions \eqref{eq:defn of A funcs} and \eqref{eq:defn of T poly} of $A_i$ and $T$, respectively, and define for $k\geq 0$
\begin{align}\label{eq:defn of R and P langle rangle}
\begin{split}
P_i^{\langle k \rangle}(x) &:= \frac{T(x)^k}{k!}\left(\frac{d}{dx} + A_i(x) \right)^k P_i(x), \\
R^{\langle k \rangle}(x) &:= \frac{T(x)^k}{k!}R^{(k)}(x).
\end{split}
\end{align}
The following lemma relates the polynomials $P_i^{\langle k \rangle}$ and $P_i^{[k]}$.

\begin{lemma}[Relationship between Pad\'e polynomials]\label{lem:relating the two kinds of Pade polynomials}
Let $A \in \mathbb{Q}(x)$, and let $P,T \in \mathbb{Z}[x]$. Then for $k\geq 0$ there exist polynomials $q_{k,j}\in \mathbb{Z}[x]$ with
\begin{align*}
\left( T \left( \frac{d}{dx} + A\right) \right)^k P = T^k\left( \frac{d}{dx} + A \right)^k P + \sum_{j=1}^{k-1}q_{k,j} T^j \left( \frac{d}{dx} + A\right)^j P
\end{align*}
and $\textup{deg}(q_{k,j}) \leq (k-j)\textup{deg}(T)$. The polynomials $q_{k,j}$ depend on $T$ but are independent of $A$ and $P$.
\end{lemma}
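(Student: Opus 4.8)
The plan is to prove the identity by induction on $k$, treating the operator $T(\frac{d}{dx}+A)$ as the natural object and expanding it one step at a time. First I would record the base cases: for $k=0$ both sides equal $P$, and for $k=1$ both sides equal $T(\frac{d}{dx}+A)P$ (the sum $\sum_{j=1}^{0}$ being empty), so the claim holds trivially with no $q_{1,j}$ to define. These cases also fix the convention that for $k\le 1$ there are no polynomials $q_{k,j}$.

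For the inductive step, suppose the identity holds for some $k\ge 1$ with polynomials $q_{k,j}\in\mathbb{Z}[x]$, $1\le j\le k-1$, satisfying $\deg(q_{k,j})\le (k-j)\deg(T)$. Apply $T(\frac{d}{dx}+A)$ to both sides. On the left this produces $(T(\frac{d}{dx}+A))^{k+1}P$. On the right I would distribute $T(\frac{d}{dx}+A)$ over each summand. Abbreviate $D_j:=T^j(\frac{d}{dx}+A)^j P$ for $0\le j\le k$, so the inductive hypothesis reads $(T(\frac{d}{dx}+A))^kP = D_k + \sum_{j=1}^{k-1}q_{k,j}D_j$. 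The key computational observation is that for any $j\ge 0$,
\begin{align*}
T\Big(\frac{d}{dx}+A\Big) D_j = T\Big(\frac{d}{dx}+A\Big)\Big(T^j\Big(\frac{d}{dx}+A\Big)^j P\Big) = j T' T^j \Big(\frac{d}{dx}+A\Big)^j P + T^{j+1}\Big(\frac{d}{dx}+A\Big)^{j+1}P = jT' D_j + D_{j+1},
\end{align*}
where I used the Leibniz rule $\frac{d}{dx}(T^j g) = jT'T^{j-1}g + T^j g'$ and then $(\frac{d}{dx}+A)(T^j g) = jT'T^{j-1}g + T^j(\frac{d}{dx}+A)g$ with $g=(\frac{d}{dx}+A)^j P$. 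Similarly $T(\frac{d}{dx}+A)(q_{k,j}D_j) = (Tq_{k,j}')D_j + q_{k,j}\,T(\frac{d}{dx}+A)D_j = (Tq_{k,j}' + jT'q_{k,j})D_j + q_{k,j}D_{j+1}$. Collecting terms, $(T(\frac{d}{dx}+A))^{k+1}P$ becomes $D_{k+1}$ (from the leading $D_k$ term, which contributes $kT'D_k + D_{k+1}$) plus a $\mathbb{Z}[x]$-linear combination of $D_1,\dots,D_k$; reading off the coefficient of each $D_j$ defines $q_{k+1,j}$.

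I would then verify the two bookkeeping claims about the $q_{k+1,j}$. Integrality is immediate: each $q_{k+1,j}$ is built from $T$, $T'$, the $q_{k,j}$ (all in $\mathbb{Z}[x]$) by multiplication, differentiation, and addition, all of which preserve $\mathbb{Z}[x]$; note differentiation of an integer polynomial stays in $\mathbb{Z}[x]$. For the degree bound, track where each $D_j$-coefficient comes from. The coefficient of $D_k$ in the new expression is $kT' + q_{k+1,k}$ — wait, more carefully: $D_k$ receives $kT'$ from expanding $D_k\mapsto kT'D_k+D_{k+1}$, and $q_{k,k-1}$ from the $D_{k-1}\mapsto q_{k,k-1}D_k$ contribution, so $q_{k+1,k} = kT' + q_{k,k-1}$, which has degree $\le\max(\deg T', \deg q_{k,k-1})\le \deg(T) = (k+1-k)\deg(T)$. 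For general $1\le j\le k-1$, $q_{k+1,j} = Tq_{k,j}' + jT'q_{k,j} + q_{k,j-1}$ (with $q_{k,0}:=0$), and by the inductive bounds $\deg(Tq_{k,j}')\le \deg(T)+(k-j)\deg(T)-1$... here I should be slightly careful and just use $\deg(Tq_{k,j}') \le \deg T + \deg q_{k,j} \le (k+1-j)\deg(T)$, $\deg(jT'q_{k,j})\le \deg T + \deg q_{k,j}\le (k+1-j)\deg(T)$, and $\deg(q_{k,j-1})\le (k-j+1)\deg(T) = (k+1-j)\deg(T)$, so the bound $\deg(q_{k+1,j})\le (k+1-j)\deg(T)$ holds. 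Finally, independence of $q_{k,j}$ from $A$ and $P$ is visible from the recursion: $q_{k+1,j}$ is expressed purely in terms of $T$ and the previous $q$'s, with no reference to $A$ or $P$. The main obstacle is simply organizing the bookkeeping of which $D_j$ each term lands in without sign or index errors; there is no conceptual difficulty, since the whole point of the $\frac{d}{dx}+A$ operator is that conjugation by $\omega$ turns it into plain $\frac{d}{dx}$ and the commutation with $T^j$ is governed by the elementary Leibniz identity above.
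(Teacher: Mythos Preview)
Your proof is correct and follows essentially the same route as the paper: induction on $k$, applying $T(\frac{d}{dx}+A)$ to both sides of the inductive hypothesis, expanding via the Leibniz rule, and reading off the recursion $q_{k+1,j}=Tq_{k,j}'+jT'q_{k,j}+q_{k,j-1}$ (with the convention $q_{k,k}=1$, $q_{k,0}=0$). The degree bound and the independence from $A$ and $P$ are then immediate from this recursion, exactly as in the paper.
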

\begin{proof}
This is \cite[Lemma 1.11]{Nag1995}. We proceed by induction on $k$. The claim is trivial for $k=0$ and $k=1$. Assume that
\begin{align*}
\left( T \left( \frac{d}{dx} + A\right) \right)^k P =q_{k,k} T^k\left( \frac{d}{dx} + A \right)^k P + \sum_{j=1}^{k-1}q_{k,j} T^j \left( \frac{d}{dx} + A\right)^j P,
\end{align*}
where $q_{k,k}=1$. Apply $T(\frac{d}{dx}+A)$ to both sides to obtain
\begin{align*}
\left( T \left( \frac{d}{dx} + A\right) \right)^{k+1} P &= T\bigg(\frac{d}{dx}+A\bigg)\bigg(T^k\left( \frac{d}{dx} + A \right)^k P \bigg) \\
&\qquad\qquad+ \sum_{j=1}^{k-1}T\bigg(\frac{d}{dx}+A\bigg)\bigg(q_{k,j} T^j \left( \frac{d}{dx} + A\right)^j P \bigg).
\end{align*}
We derive
\begin{align*}
T\bigg(\frac{d}{dx}+A\bigg)\bigg(T^k\left( \frac{d}{dx} + A \right)^k P \bigg) &= k T^k T' \left( \frac{d}{dx}+A\right)^k P + T^{k+1}\left( \frac{d}{dx}+A\right)^{k+1} P
\end{align*}
and
\begin{align*}
T\bigg(\frac{d}{dx}+A\bigg)\bigg(q_{k,j} T^j \left( \frac{d}{dx} + A\right)^j P \bigg) &= q_{k,j}'T^{j+1} \left( \frac{d}{dx} + A\right)^j P + jq_{k,j} T^j T' \left( \frac{d}{dx} + A\right)^j P \\ 
&\qquad\qquad+ q_{k,j}T^{j+1} \left( \frac{d}{dx} + A\right)^{j+1} P.
\end{align*}
If we define
\begin{align}\label{eq:q polynomials}
\begin{split}
q_{k+1,k+1} &= 1, \\
q_{k+1,1} &= \frac{d}{dx} (q_{k,1} T), \\
q_{k+1,j} &= q_{k,j}' T + jq_{k,j} T'+q_{k,j-1}, \ \ \ \ \ \  \ 2\leq j \leq k,
\end{split}
\end{align}
then we obtain
\begin{align*}
\left( T \left( \frac{d}{dx} + A\right) \right)^{k+1} P =q_{k+1,k+1} T^{k+1}\left( \frac{d}{dx} + A \right)^{k+1} P + \sum_{j=1}^{k}q_{k+1,j} T^j \left( \frac{d}{dx} + A\right)^j P,
\end{align*}
which completes the induction. The properties claimed for the polynomials $q_{k,j}$ follow from \eqref{eq:q polynomials} by another induction.
\end{proof}

We have the following analogue of Lemma \ref{lem:props of square bracket Pade polynomials} (see also \cite[Lemma 1.12]{Nag1995}).

\begin{lemma}[Properties of $P_i^{\langle k \rangle}$ and $R^{\langle k \rangle}$]\label{lem:props of angle bracket Pade polynomials}
Let $P_i^{\langle k \rangle}$ and $R^{\langle k \rangle}$ be defined as in \eqref{eq:defn of R and P langle rangle}. The following are true for $k\geq 0$\textup{:}
\begin{enumerate}
\item $P_i^{\langle k \rangle}(x) \in \mathbb{Q}[x]$,
\item $\textup{deg}(P_i^{\langle k \rangle}) \leq D+k\mathcal{M}$,
\item $R^{\langle k \rangle}(x) = \sum_{i=0}^2 P_i^{\langle k \rangle}(x) \omega_i(x)$.
\end{enumerate}
\end{lemma}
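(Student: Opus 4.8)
The plan is to prove all three claims of Lemma~\ref{lem:props of angle bracket Pade polynomials} by exploiting the relationship, established in Lemma~\ref{lem:relating the two kinds of Pade polynomials}, between the operators $(T(\frac{d}{dx}+A))^k$ and $T^k(\frac{d}{dx}+A)^k$. First I would record the elementary identity $(T\frac{d}{dx})^k = \sum_{j=1}^{k} \tilde q_{k,j} T^j (\frac{d}{dx})^j$ for suitable polynomials $\tilde q_{k,j}$ with $\deg(\tilde q_{k,j}) \le (k-j)\deg(T)$ and $\tilde q_{k,k}=1$; this is the special case $A=0$, $P=$ identity of Lemma~\ref{lem:relating the two kinds of Pade polynomials} (or, more precisely, its proof applied with the same recursion \eqref{eq:q polynomials}), and in fact $\tilde q_{k,j} = q_{k,j}$. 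Combining this with Lemma~\ref{lem:relating the two kinds of Pade polynomials} applied to $A=A_i$ and $P=P_i$, and with the definition \eqref{eq:defn of P square bracket} of $P_i^{[k]}$, one gets
\[
P_i^{[k]} = \sum_{j=1}^{k} q_{k,j}\, T^j \Big(\tfrac{d}{dx}+A_i\Big)^j P_i = \sum_{j=1}^{k} q_{k,j}\, j!\, P_i^{\langle j\rangle},
\]
with $q_{k,k}=1$, and conversely $P_i^{\langle k\rangle}$ is an $\mathbb{Q}[x]$-linear combination of $P_i^{[0]},\dots,P_i^{[k]}$; this is the ``nonsingular transformation'' alluded to in the section introduction, and it is what will later let $P_i^{\langle k\rangle}$ inherit the independence of $P_i^{[k]}$.

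For claim~(1): since $P_i \in \mathbb{Z}[x]$ by Lemma~\ref{lem:initial pade polynomials} and $T A_i \in \mathbb{Z}[x]$ by \eqref{eq:defn of T poly}, an easy induction shows $T^k(\frac{d}{dx}+A_i)^k P_i \in \mathbb{Z}[x]$ — each application of $T(\frac{d}{dx}+A_i)$ preserves integrality since $T\frac{d}{dx}$ and $TA_i$ both map $\mathbb{Z}[x]$ to $\mathbb{Z}[x]$ — so $P_i^{\langle k\rangle} = \frac{1}{k!} T^k(\frac{d}{dx}+A_i)^k P_i \in \mathbb{Q}[x]$ as claimed (the $\frac{1}{k!}$ is the only obstruction to integrality, which is why only $\mathbb{Q}[x]$ is asserted). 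Alternatively, one can observe that $\left(T\frac{d}{dx}+TA_i\right)^k P_i = \left(T\frac{d}{dx}\right)^k$ applied formally... but the direct induction is cleanest. For claim~(2): $\deg(P_i)\le D$, $\deg(T)=\mathcal{M}$, and $\deg(TA_i)\le \mathcal{M}$ (stated after \eqref{eq:defn of T poly}); each application of the operator $T\frac{d}{dx}$ changes degree by $\le \mathcal{M}-1$ and $TA_i$ multiplication changes it by $\le \mathcal{M}$, so by induction $\deg\big(T^k(\frac{d}{dx}+A_i)^k P_i\big) \le D + k\mathcal{M}$, and dividing by the constant $k!$ leaves the degree unchanged. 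For claim~(3): starting from $R = \sum_{i=0}^2 P_i \omega_i$ and $\omega_i' = A_i\omega_i$ (Lemma~\ref{lem:diff eq for omega funcs} and \eqref{eq:defn of A funcs}), one computes
\[
R' = \sum_{i=0}^2 \big(P_i' \omega_i + P_i \omega_i'\big) = \sum_{i=0}^2 \big(P_i' + A_i P_i\big)\omega_i,
\]
so $R' = \sum_i \big((\frac{d}{dx}+A_i)P_i\big)\omega_i$; iterating, $R^{(k)} = \sum_i \big((\frac{d}{dx}+A_i)^k P_i\big)\omega_i$, and multiplying through by $T^k/k!$ gives $R^{\langle k\rangle} = \sum_i P_i^{\langle k\rangle}\omega_i$.

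None of this is deep; the proof is essentially three short inductions, and the whole point is bookkeeping rather than ideas. The only place requiring a little care — the mild ``obstacle'' — is making sure that in the degree count for claim~(2) one does not lose track of the fact that $\frac{d}{dx}$ lowers degree by one while $TA_i$ need not, so that the bound $D+k\mathcal{M}$ (rather than something like $D + k(\mathcal{M}-1)$) is the honest one; this is harmless since we only need an upper bound, but it should be stated correctly. One could also simply cite \cite[Lemma~1.12]{Nag1995}, as the paper signals, but since the paper aims to be self-contained I would write out the inductions explicitly as above.
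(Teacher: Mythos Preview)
Your argument for claim~(3) is correct and essentially the same as the paper's. For claims~(1) and~(2), however, there is a genuine gap: the induction you describe (``each application of $T(\frac{d}{dx}+A_i)$ preserves integrality'', and likewise your degree count) proves that $(T(\frac{d}{dx}+A_i))^k P_i = P_i^{[k]}$ lies in $\mathbb{Z}[x]$ with degree $\le D+k\mathcal{M}$, not that $T^k(\frac{d}{dx}+A_i)^k P_i = k!\,P_i^{\langle k\rangle}$ does. These operators differ because multiplication by $T$ and $\frac{d}{dx}+A_i$ do not commute; after one application of $\frac{d}{dx}+A_i$ the result is no longer a polynomial, so you cannot simply ``apply $T(\frac{d}{dx}+A_i)$ again''.

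The paper fixes this exactly via the relation you set up in your first paragraph but then do not use: writing
\[
k!\,P_i^{\langle k\rangle} = P_i^{[k]} - \sum_{j=1}^{k-1} q_{k,j}\, j!\, P_i^{\langle j\rangle},
\]
one inducts on $k$, reading off rationality and the degree bound from the known properties of $P_i^{[k]}$ (Lemma~\ref{lem:props of square bracket Pade polynomials}) and $\deg(q_{k,j})\le (k-j)\mathcal{M}$ (Lemma~\ref{lem:relating the two kinds of Pade polynomials}). Your direct route can also be salvaged: setting $Q_k=(\frac{d}{dx}+A_i)^k P_i$, one has the identity
\[
T^{k+1}Q_{k+1} = T\,(T^kQ_k)' - kT'(T^kQ_k) + (TA_i)(T^kQ_k),
\]
from which both $T^{k+1}Q_{k+1}\in\mathbb{Z}[x]$ and $\deg(T^{k+1}Q_{k+1})\le D+(k+1)\mathcal{M}$ follow inductively. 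But it is this identity, not the bare assertion that $T(\frac{d}{dx}+A_i)$ preserves $\mathbb{Z}[x]$, that carries the induction.
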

\begin{proof}
It is easy to prove by induction that
\begin{align*}
\frac{d^k}{dx^k}(P_i(x) \omega_i(x)) = \omega_i \cdot \left(\frac{d}{dx} + A_i \right)^k P_i,
\end{align*}
which gives the third claim. For the other two claims we use the identity
\begin{align}\label{eq:relate square bracket and angle bracket}
k! P_i^{\langle k \rangle}(x) &= P_i^{[k]}(x) - \sum_{j=1}^{k-1} q_{k,j}(x) j! P_i^{\langle j \rangle}(x),
\end{align}
which follows from Lemma \ref{lem:relating the two kinds of Pade polynomials}. Since $P_i^{[k]},q_{k,j}\in \mathbb{Z}[x]$, an induction shows $P_i^{\langle k \rangle} \in \mathbb{Q}[x]$.

It remains to prove that $\textup{deg}(P_i^{\langle k \rangle}) \leq D+k\mathcal{M}$, which we also do by induction. The claim holds true for $k=0,1$. Now assume that the claim is true for all $j < k$. Lemma \ref{lem:props of square bracket Pade polynomials} gives $\text{deg}(P_i^{[k]})\leq D+k\mathcal{M}$, and by Lemma \ref{lem:relating the two kinds of Pade polynomials} and the induction hypothesis we have $\text{deg}(q_{k,j}P_i^{\langle j \rangle})\leq (k-j)\mathcal{M} + D + j\mathcal{M} = D+k\mathcal{M}$. Since $\text{deg}(P_1+P_2)\leq \max(\text{deg}(P_1),\text{deg}(P_2))$ the relation \eqref{eq:relate square bracket and angle bracket} completes the induction.
\end{proof}

\begin{lemma}[$P_i^{\langle k \rangle}$ matrix has full rank]\label{lem:full rank for angle bracket polys}
Let $P_i^{\langle k \rangle}$ be defined as in \eqref{eq:defn of P square bracket} and let $\Delta$ be defined as in \eqref{eq:defn of det poly}. Let $\alpha \in \mathbb{C}\backslash\{0\}$ be such that $T(\alpha)\neq 0$. Assume $D > \frac{\mathcal{M} + 2}{1-\epsilon_0}$. Write $a=\textup{ord}_{x=\alpha}\Delta$ ($a$ is a nonnegative integer). Then the matrix
\begin{align*}
(P_i^{\langle k \rangle}(\alpha))_{\substack{0\leq k \leq a+2 \\ 0\leq i \leq 2}}
\end{align*}
has rank three.
\end{lemma}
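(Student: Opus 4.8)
The plan is to read off the desired rank statement from its counterpart for the polynomials $P_i^{[k]}$, namely Lemma \ref{lem:full rank for square bracket polys}, by exploiting the triangular change of basis recorded in \eqref{eq:relate square bracket and angle bracket}. That identity reads $k!\,P_i^{\langle k\rangle}(x) = P_i^{[k]}(x) - \sum_{j=1}^{k-1} q_{k,j}(x)\, j!\, P_i^{\langle j\rangle}(x)$, equivalently $P_i^{[k]}(x) = k!\,P_i^{\langle k\rangle}(x) + \sum_{j=1}^{k-1} q_{k,j}(x)\, j!\, P_i^{\langle j\rangle}(x)$, with $q_{k,j}\in\mathbb{Z}[x]$ depending only on $T$. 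Evaluating at a fixed $\alpha$, this says precisely that the $(a+3)\times 3$ matrix $\big(P_i^{[k]}(\alpha)\big)_{0\le k\le a+2,\,0\le i\le 2}$ is the left product $L\cdot\big(P_i^{\langle k\rangle}(\alpha)\big)_{0\le k\le a+2,\,0\le i\le 2}$, where $L=(L_{k,j})_{0\le k,j\le a+2}$ is the lower-triangular matrix with $L_{k,k}=k!$, $L_{k,j}=q_{k,j}(\alpha)\,j!$ for $1\le j\le k-1$, and $L_{k,j}=0$ otherwise.

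First I would verify that the hypotheses here ($\alpha\in\mathbb{C}\setminus\{0\}$ with $T(\alpha)\neq 0$, and $D>\frac{\mathcal{M}+2}{1-\epsilon_0}$) are exactly those of Lemma \ref{lem:full rank for square bracket polys}; that lemma therefore applies, giving in particular $\Delta\neq 0$ (so that $a=\textup{ord}_{x=\alpha}\Delta$ is a well-defined nonnegative integer) and telling us that $\big(P_i^{[k]}(\alpha)\big)_{0\le k\le a+2,\,0\le i\le 2}$ has rank three. Next I would note that the matrix $L$ is lower triangular with diagonal entries $0!,1!,\dots,(a+2)!$, all nonzero, and with finite complex off-diagonal entries since $q_{k,j}\in\mathbb{Z}[x]$; hence $L$ is invertible over $\mathbb{C}$. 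Since left multiplication by an invertible matrix preserves rank, it follows that $\big(P_i^{\langle k\rangle}(\alpha)\big)_{0\le k\le a+2,\,0\le i\le 2}$ has the same rank, namely three.

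There is essentially no obstacle in this argument: all of the substance is already in Lemma \ref{lem:full rank for square bracket polys} (and, behind it, in the nonvanishing of $\Delta$ coming from Siegel's differential-equation argument, Lemma \ref{lem:det poly not identically zero}). The only points requiring a little care are the exact bookkeeping of which factorials and which $q_{k,j}$ appear in the transition matrix $L$, and the observation that the fact $P_i^{\langle k\rangle}\in\mathbb{Q}[x]$ rather than $\mathbb{Z}[x]$ is irrelevant, since the rank is computed over $\mathbb{C}$.
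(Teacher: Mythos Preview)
Your proof is correct and is essentially identical to the paper's own argument: the paper also writes $(P_i^{[k]}(\alpha))$ as a lower-triangular matrix (with diagonal $0!,1!,\dots,L!$) times $(P_i^{\langle k\rangle}(\alpha))$, observes that the triangular matrix is invertible, and invokes Lemma~\ref{lem:full rank for square bracket polys}. The only cosmetic difference is that the paper cites Lemma~\ref{lem:relating the two kinds of Pade polynomials} directly rather than the derived identity~\eqref{eq:relate square bracket and angle bracket}, but these are the same thing.
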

\begin{proof}
The argument is that of \cite[Lemma 1.13]{Nag1995}. For a nonnegative integer $L$, Lemma \ref{lem:relating the two kinds of Pade polynomials} implies
\begin{align*}
\begin{pmatrix}
P_0^{[0]}(x) & P_1^{[0]}(x) & P_2^{[0]}(x) \\
&\vdots \\
P_0^{[L]}(x) & P_1^{[L]}(x) & P_2^{[L]}(x)
\end{pmatrix} &= M (x)
\begin{pmatrix}
P_0^{\langle 0 \rangle}(x) & P_1^{\langle 0 \rangle}(x) & P_2^{\langle 0 \rangle}(x) \\
&\vdots \\
P_0^{\langle L \rangle}(x) & P_1^{\langle L \rangle}(x) & P_2^{\langle L \rangle}(x)
\end{pmatrix},
\end{align*}
where $M(x)$ is the $(L+1) \times (L+1)$ lower triangular matrix
\begin{align*}
M(x) &= 
\begin{pmatrix}
1 & 0 & 0 & 0 & \cdots & 0 & 0 \\
0 & 1 & 0 & 0 & \cdots & 0 & 0 \\
0 & q_{2,1}(x) & 2! & 0 & \cdots & 0 & 0 \\
0 & q_{3,1}(x) & 2! q_{3,2}(x) & 3! & \cdots & 0 & 0 \\
&\vdots & & & \ddots \\
0 & q_{L,1}(x) & 2! q_{L,2}(x) & 3! q_{L,3}(x) & \cdots & (L-1)! q_{L,L-1}(x) & L!
\end{pmatrix}.
\end{align*}
The matrix $M(\alpha)$ is nonsingular for any $\alpha \in \mathbb{C}$ since $\det (M(\alpha)) = \prod_{j=0}^L j! \neq 0$,
so the result follows from Lemma \ref{lem:full rank for square bracket polys}.
\end{proof}

We are now in a position to study some properties of the rational numbers $P_i^{\langle k \rangle} (1/n)$.

\begin{lemma}[Rational numbers from Pad\'e polynomials]\label{lem:rationals coming from Pade polynomials}
Let $P_i^{\langle k \rangle}$ be defined as in \eqref{eq:defn of P square bracket}, and let $n \in [N,2N)$. The following are true:
\begin{enumerate}
\item $P_i^{\langle k \rangle}$ is a polynomial with rational coefficients whose denominators divide $r^k$.
\item $P_i^{\langle k \rangle}(1/n)$ is a rational number (possibly equal to zero) with denominator dividing $r^k n^{D+k\mathcal{M}}$.
\item $|P_i^{\langle k \rangle}(1/n)| \leq (2rN)^k 10^{\mathcal{M}\epsilon_0^{-1}}(4r^4\mathcal{M})^{(2-\epsilon_0)(3-\epsilon_0)\epsilon_0^{-1}D}$.
\end{enumerate}
\end{lemma}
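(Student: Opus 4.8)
The three claims are essentially bookkeeping on the definition $P_i^{\langle k \rangle}(x) = \frac{T(x)^k}{k!}(\frac{d}{dx}+A_i)^k P_i(x)$, using what we already know about $P_i$ (Lemma \ref{lem:initial pade polynomials}), about $T$ and $A_i$ (equations \eqref{eq:defn of A funcs}, \eqref{eq:defn of T poly}), and about the degree bound from Lemma \ref{lem:props of angle bracket Pade polynomials}. The plan is to treat (1), then deduce (2) from (1) together with the degree bound, and finally grind out (3) via a direct size estimate on the coefficients.

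For (1), I would argue by induction on $k$ using the identity \eqref{eq:relate square bracket and angle bracket}, namely $k! P_i^{\langle k \rangle} = P_i^{[k]} - \sum_{j=1}^{k-1} q_{k,j}\, j!\, P_i^{\langle j \rangle}$. Since $P_i^{[k]}, q_{k,j} \in \mathbb{Z}[x]$ (Lemmas \ref{lem:props of square bracket Pade polynomials} and \ref{lem:relating the two kinds of Pade polynomials}), and since by induction $j! P_i^{\langle j \rangle} \in \mathbb{Z}[x]$ (with $j! P_i^{\langle j \rangle}$ in fact having coefficients divisible by nothing worse than needed), the right-hand side lies in $\mathbb{Z}[x]$, so $k! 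P_i^{\langle k \rangle} \in \mathbb{Z}[x]$. To sharpen $k!$ to $r^k$, I would instead track the $p$-adic valuation of the coefficients directly from the definition: expanding $(\frac{d}{dx}+A_i)^k P_i$ produces a $\mathbb{Z}[x]$-linear combination of terms $(T A_i)^{\text{something}}$ and derivatives of $P_i$, and the only denominators introduced come from the $\frac1r$ in each $A_i$ (recall $A_i = \sum_j \frac{j}{r}\frac{1}{1-jx}$, so $T A_i \in \mathbb{Z}[x]$ but each application of $A_i$ contributes at most one factor of $r$ in the denominator after clearing by $T$). A clean way: show by induction that $T^k \left(\tfrac{d}{dx}+A_i\right)^k P_i \in \tfrac1{r^k}\mathbb{Z}[x]$, using $T A_i \in \mathbb{Z}[x]$ and $T\cdot\tfrac{d}{dx}$ preserving $\mathbb{Z}[x]$; the $k!$ in the denominator of $P_i^{\langle k \rangle}$ is, up to the factor $r^k$, cancelled — one must check the integrality of the coefficients of $\frac1{k!}(\tfrac{d}{dx}+A_i)^k$ acting on a polynomial, which is the standard fact that $\frac1{k!}\frac{d^k}{dx^k}$ maps $\mathbb{Z}[x]$ to $\mathbb{Z}[x]$ together with the Leibniz expansion. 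Claim (2) then follows immediately: $P_i^{\langle k \rangle}$ has degree $\leq D + k\mathcal{M}$ by Lemma \ref{lem:props of angle bracket Pade polynomials}, so evaluating at $x = 1/n$ and clearing the common denominator $n^{D+k\mathcal{M}}$ from the monomials, together with the denominator $r^k$ from (1), gives a rational with denominator dividing $r^k n^{D+k\mathcal{M}}$.

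For (3), the size bound, I would again work from the representation of $P_i^{\langle k \rangle}$ as a $\mathbb{Z}[x]$-combination (with controlled coefficients) of $T(x)^{j}$-multiples and derivatives of $P_i$, or more simply estimate $\left|\frac{T(1/n)^k}{k!}\left(\tfrac{d}{dx}+A_i\right)^k P_i(1/n)\right|$ directly. One has $|T(1/n)| = r\prod_{j=1}^{\mathcal{M}}|1 - j/n| \leq r \cdot 2^{\mathcal{M}}$ crudely (since each factor is $\leq 2$ for $n \geq N$ large), while $\left(\tfrac{d}{dx}+A_i\right)^k$ applied to $P_i$ and evaluated at $1/n$ can be bounded by expanding: each $A_i(1/n) = \sum_{j=1}^{\beta_i-1}\frac{j}{r}\frac{1}{1-j/n}$ is $O(\mathcal{M}^2)$, each derivative of $A_i$ at $1/n$ is likewise $O_r(\mathcal{M}^{O(1)})$, and each derivative of $P_i$ picks up the degree $\leq D$ and the coefficient bound from Lemma \ref{lem:initial pade polynomials}(2). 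Summing over the $\leq (2\mathcal{M})^{O(k)}$ or so terms in the Leibniz/Faà di Bruno expansion, dividing by $k!$ to absorb the multinomial factors, and using $D \asymp_{P,s} \mathcal{N} \asymp N$ so that $(2\mathcal{M})^D$-type factors dominate lower-order $k$-dependent factors, one collapses everything into the stated bound $(2rN)^k 10^{\mathcal{M}\epsilon_0^{-1}}(4r^4\mathcal{M})^{(2-\epsilon_0)(3-\epsilon_0)\epsilon_0^{-1}D}$; the factor $(2rN)^k$ comes from $|T(1/n)|^k \leq (r2^{\mathcal{M}})^k$ absorbed generously into $(2rN)^k$ (using $2^{\mathcal{M}} \leq N$ for $\mathcal{M}$ a small power of $N$), and the replacement of Lemma \ref{lem:initial pade polynomials}'s $4^{\mathcal{M}\epsilon_0^{-1}}$ by $10^{\mathcal{M}\epsilon_0^{-1}}$ leaves room for the remaining combinatorial constants.

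The main obstacle is the integrality refinement in (1) — getting the denominator down to exactly $r^k$ rather than $r^k k!$ or similar. This requires being careful that the $\frac1{k!}$ in the definition genuinely cancels the factorial growth coming from repeated differentiation (which it does, by the elementary fact that $\frac1{k!}\partial^k$ preserves $\mathbb{Z}[x]$) while the only surviving denominators are the powers of $r$ from the $A_i$'s; one must check the mixed terms (derivatives landing partly on $P_i$ and partly on the $A_i$-factors) do not spoil this, which is where a careful induction, perhaps tracking $r^k T^k (\tfrac{d}{dx}+A_i)^k P_i / k! \in \mathbb{Z}[x]$ as the induction hypothesis, is needed. Everything else is a routine, if tedious, chain of crude estimates.
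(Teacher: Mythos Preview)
Your treatment of (2) as an immediate consequence of (1) and the degree bound from Lemma \ref{lem:props of angle bracket Pade polynomials} is fine and matches the paper.

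For (1), however, there is a genuine gap. You correctly identify the obstacle---reducing the denominator from $k!$ (or $r^k k!$) to $r^k$---but neither of your proposed fixes actually closes it. The induction via \eqref{eq:relate square bracket and angle bracket} only shows $k!\, P_i^{\langle k\rangle} \in \mathbb{Z}[x]$, and pushing $r^k$ through that identity still leaves you needing the right-hand side to be divisible by $k!$, which the induction hypothesis does not supply. Your alternative suggestion of ``tracking the $p$-adic valuation'' by expanding $(\tfrac{d}{dx}+A_i)^k$ runs into the non-commutativity of $\tfrac{d}{dx}$ and $A_i$: there is no binomial expansion, and normal-ordering produces a Fa\`a di Bruno--type mess in which the $k!$ cancellation is not transparent.

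The device the paper uses, which you are missing, is to pass through $\omega_i$: since $(\tfrac{d}{dx}+A_i)^k P_i = \omega_i^{-1} (P_i\omega_i)^{(k)}$, the \emph{ordinary} Leibniz rule gives the clean decomposition
\[
P_i^{\langle k\rangle}(x) \;=\; \sum_{\ell=0}^k \frac{T(x)^{k-\ell}}{(k-\ell)!}\,P_i^{(k-\ell)}(x)\,\cdot\,\frac{T(x)^{\ell}}{\ell!}\,\frac{\omega_i^{(\ell)}(x)}{\omega_i(x)}.
\]
Now the $k!$ has been split as $(k-\ell)!\,\ell!$: the first factor is handled by the standard fact $\tfrac{1}{m!}P^{(m)}\in\mathbb{Z}[x]$ for $P\in\mathbb{Z}[x]$, and the second by an explicit computation of $\omega_i^{(\ell)}/\omega_i$ as a sum over compositions $k_1+\cdots+k_{\beta_i-1}=\ell$ of products of binomial coefficients $\binom{-1/r}{k_j}$, whose denominators are controlled by Lemma \ref{lem:denom of binomial coeff}. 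This yields $r^\ell\cdot\tfrac{T^\ell}{\ell!}\,\omega_i^{(\ell)}/\omega_i\in\mathbb{Z}[x]$ and hence $r^k P_i^{\langle k\rangle}\in\mathbb{Z}[x]$.

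The same formula drives (3), and here your bookkeeping is off: the factor $(2rN)^k$ does \emph{not} come from $|T(1/n)|^k$ (in fact $|T(1/n)|\le r$, since each $|1-j/n|\le 1$). It arises instead from the $\alpha^{-(k-\ell)}=n^{k-\ell}\le (2N)^{k-\ell}$ that appears when one writes $\tfrac{1}{(k-\ell)!}P_i^{(k-\ell)}(1/n)=n^{k-\ell}\sum_v p_{i,v}\binom{v}{k-\ell}n^{-v}$, combined with the factor $(2r\mathcal{M})^\ell$ from the $\omega$-part. Without the Leibniz decomposition above you would not arrive at the stated bound with the correct dependence on $k$.
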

\begin{proof}
The argument is essentially that of \cite[Lemma 2.6]{Nag1995}. We recall from Lemma \ref{lem:diff eq for omega funcs} and \eqref{eq:defn of A funcs} that $\omega_i'(x) = A_i(x) \omega(x)$. Now define $W_i = P_i \omega_i$, and recall from the proof of Lemma \ref{lem:props of angle bracket Pade polynomials} that
\begin{align*}
W_i^{(k)} &= \omega_i \cdot \left( \frac{d}{dx} + A_i\right)^k P_i.
\end{align*}
On the other hand, we have
\begin{align*}
\frac{1}{k!}W_i^{(k)} &= \frac{1}{k!} \sum_{\ell=0}^k {k \choose \ell} P_i^{(k-\ell)} \omega_i^{(\ell)},
\end{align*}
and it follows that
\begin{align}\label{eq:expanded form of P angel bracket}
P_i^{\langle k \rangle}(x) &= \sum_{\ell=0}^k \frac{T(x)^{k-\ell}}{(k-\ell)!} P_i^{(k-\ell)}(x) \cdot \frac{T(x)^\ell}{\ell!}\frac{\omega_i^{(\ell)}(x)}{\omega_i(x)}.
\end{align}
We have $\frac{1}{(k-\ell)!} P_i^{(k-\ell)} \in \mathbb{Z}[x]$ since $P_i \in \mathbb{Z}[x]$ and $\frac{1}{h!}\frac{d^h}{dx^h}(x^n) \in \mathbb{Z}[x]$ for any $h,n\geq 0$. Now we turn to $\frac{T(x)^\ell}{\ell!}\frac{\omega_i^{(\ell)}(x)}{\omega_i(x)}$. Taking derivatives yields
\begin{align*}
\omega_i^{(\ell)}(x) &= \sum_{\substack{k_1 + \cdots + k_{\beta_i-1} = \ell \\ k_j \geq 0}} \frac{\ell!}{k_1! \cdots k_{\beta_i-1}!}\prod_{j=1}^{\beta_i-1} \frac{d^{k_j}}{dx^{k_j}}((1-jx)^{-1/r}) \\
&= \sum_{\substack{k_1 + \cdots + k_{\beta_i-1} = \ell \\ k_j \geq 0}} \ell! \prod_{j=1}^{\beta_i-1} j^{k_j}(-1)^{k_j} {{-1/r}\choose k_j} (1-jx)^{-1/r-k_j},
\end{align*}
and by Lemma \ref{lem:denom of binomial coeff}
\begin{align*}
r^\ell\frac{T(x)^\ell}{\ell!}\frac{\omega_i^{(\ell)}(x)}{\omega_i(x)} &= \sum_{\substack{k_1 + \cdots + k_{\beta_i-1} = \ell \\ k_j \geq 0}} r^{2\ell}\prod_{j=1}^{\beta_i-1} j^{k_j}(-1)^{k_j} {{-1/r}\choose k_j} (1-jx)^{\ell-k_j}\prod_{j=\beta_i}^{\mathcal{M}} (1-jx)^\ell
\end{align*}
is a polynomial with integer coefficients. This proves the first claim, upon recalling that $\ell \leq k$.

For the second claim, note that $P_i^{\langle k \rangle}$ is a polynomial of degree $\leq D + k\mathcal{M}$ (Lemma \ref{lem:props of angle bracket Pade polynomials}) with coefficients which are rational numbers whose denominators divide $r^k$. Then $P_i^{\langle k \rangle}(1/n)$ is a rational number with denominator dividing $r^k n^{D+k\mathcal{M}}$.

It remains to prove the third claim of the lemma. By \eqref{eq:expanded form of P angel bracket} and the triangle inequality
\begin{align*}
|P_i^{\langle k \rangle}(1/n)| &\leq \sum_{\ell=0}^k \frac{T(1/n)^{k-\ell}}{(k-\ell)!} |P_i^{(k-\ell)}(1/n)| \cdot \frac{T(1/n)^\ell}{\ell!}\frac{|\omega_i^{(\ell)}(1/n)|}{|\omega_i(1/n)|}.
\end{align*}
We have $T(1/n)^{k-\ell} = r^{k-\ell} \prod_{j=1}^{\mathcal{M}}(1-\tfrac{j}{n})^{k-\ell} < r^{k-\ell}$, and, writing $p_{i,v}$ for the coefficients of $P_i$,
\begin{align*}
\frac{1}{(k-\ell)!}P_i^{(k-\ell)}(x) &= \sum_{v=k-\ell}^D p_{i,v} \frac{1}{(k-\ell)!} \frac{d^{k-\ell}}{dx^{k-\ell}}(x^v) = x^{-(k-\ell)}\sum_{v=k-\ell}^D p_{i,v} {v \choose {k-\ell}} x^v,
\end{align*}
so by Lemma \ref{lem:initial pade polynomials}
\begin{align*}
\frac{1}{(k-\ell)!} |P_i^{(k-\ell)}(1/n)|&\leq (2N)^{k-\ell}4^{\mathcal{M}\epsilon_0^{-1}}(4r^4\mathcal{M})^{(2-\epsilon_0)(3-\epsilon_0)\epsilon_0^{-1} D} \sum_{v=k-\ell}^D \left(\frac{4\mathcal{M}}{N} \right)^v \\
&\leq (2N)^{k-\ell}5^{\mathcal{M}\epsilon_0^{-1}}(4r^4\mathcal{M})^{(2-\epsilon_0)(3-\epsilon_0)\epsilon_0^{-1} D}.
\end{align*}
Applying the triangle inequality again,
\begin{align*}
\frac{T(1/n)^\ell}{\ell!}\frac{|\omega_i^{(\ell)}(1/n)|}{|\omega_i(1/n)|} &\leq r^\ell \sum_{\substack{k_1 + \cdots + k_{\beta_i-1} = \ell \\ k_j \geq 0}} \prod_{j=1}^{\beta_i-1} j^{k_j} \leq (r\mathcal{M})^\ell \sum_{\substack{k_1 + \cdots + k_{\beta_i-1} = \ell \\ k_j \geq 0}}1 \leq 2^\mathcal{M} (2r\mathcal{M})^\ell,
\end{align*}
and therefore
\begin{align*}
|P_i^{\langle k \rangle}(1/n)| &\leq 2^{\mathcal{M}+1} (2rN)^k 5^{\mathcal{M}\epsilon_0^{-1}}(4r^4\mathcal{M})^{(2-\epsilon_0)(3-\epsilon_0)\epsilon_0^{-1}D} \\
&\leq (2rN)^k 10^{\mathcal{M}\epsilon_0^{-1}}(4r^4\mathcal{M})^{(2-\epsilon_0)(3-\epsilon_0)\epsilon_0^{-1}D}. \qedhere
\end{align*}
\end{proof}

\section{Proof of Proposition \ref{prop:existence of good polynomials}}\label{sec:proof of key prop}

Assume the hypotheses of Proposition \ref{prop:existence of good polynomials}. In particular, we have
\begin{align}\label{eq:many solutions}
\sum_{\substack{\mathcal{N} \leq n < \mathcal{N} + \lfloor \frac{N}{\log N}\rfloor \\ N \leq n < 2N \\ sn! = P(x) \\ s(n-\beta_i)! = P(x_i), i=1,2}} 1 > \frac{N}{\mathcal{M}^3 (\log N)}.
\end{align}
Given $D$ and $\beta_i, \omega_i$ as in \eqref{eq:defn of omega functions}, we apply Lemma \ref{lem:initial pade polynomials} to construct Pad\'e polynomials $P_i(x), 0\leq i \leq 2$. Since $D \asymp_{P,s} N > 10^6 \mathcal{M}$, Lemma \ref{lem:full rank for square bracket polys} implies the determinant polynomial $\Delta(x)$ is not identically zero. Now we avail ourselves of the following trivial lemma.

\begin{lemma}[Polynomial vanishing to low order at a point]\label{lem:point of low order of vanishing}
Let $Q \in \mathbb{C}[x]$ be a nonzero polynomial of degree $\leq d$. Let $x_1,\ldots,x_t$ be distinct complex numbers. Then some $x_i$ satisfies
\begin{align*}
\textup{ord}_{x=x_i}\, Q(x) \leq \frac{d}{t}.
\end{align*}
\end{lemma}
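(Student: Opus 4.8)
The plan is to argue by a counting/pigeonhole principle on orders of vanishing. Since $Q$ is a nonzero polynomial of degree $\le d$, it has at most $d$ roots counted with multiplicity; equivalently, $\sum_{x \in \mathbb{C}} \textup{ord}_{x=x}\, Q(x) \le d$, where the sum is really over the finitely many roots of $Q$. In particular, restricting the sum to the $t$ distinct points $x_1, \ldots, x_t$ can only decrease it, so
\begin{align*}
\sum_{i=1}^t \textup{ord}_{x=x_i}\, Q(x) \le d.
\end{align*}

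If every $x_i$ satisfied $\textup{ord}_{x=x_i}\, Q(x) > d/t$, then summing over $i$ would give $\sum_{i=1}^t \textup{ord}_{x=x_i}\, Q(x) > t \cdot (d/t) = d$, contradicting the displayed inequality. Hence at least one $x_i$ must satisfy $\textup{ord}_{x=x_i}\, Q(x) \le d/t$, which is the claim.

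There is essentially no obstacle here; the only point requiring a word of care is the standard fact that the sum of the multiplicities of the roots of a nonzero degree-$\le d$ polynomial over $\mathbb{C}$ is at most $d$, which follows immediately from unique factorization in $\mathbb{C}[x]$ (or from the fundamental theorem of algebra together with the factorization into linear factors). One should also note that $\textup{ord}_{x=x_i}\, Q(x)$ is exactly the multiplicity of $x_i$ as a root of $Q$ (it is $0$ if $x_i$ is not a root), which matches the definition of $\textup{ord}$ given in the notation section, and that the $x_i$ being distinct is what makes their individual orders sum to at most the total. This completes the proof.
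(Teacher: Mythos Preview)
Your proof is correct and is essentially the same as the paper's: both bound $\sum_{i=1}^t \textup{ord}_{x=x_i} Q(x) \le d$ using the fact that a nonzero degree-$\le d$ polynomial has at most $d$ roots with multiplicity, and then conclude that the minimum order is $\le d/t$. The paper states this via $t \cdot \min_i \textup{ord}_{x=x_i} Q(x) \le \sum_i \textup{ord}_{x=x_i} Q(x) \le d$ rather than by contradiction, but the content is identical.
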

\begin{proof}
A nonzero polynomial of degree $\leq d$ has $\leq d$ roots (counted with multiplicity) so
\begin{align*}
t\cdot \min_{1\leq i \leq t} \text{ord}_{x=x_i} Q(x) &\leq \sum_{i=1}^t\text{ord}_{x=x_i} Q(x) \leq d. \qedhere
\end{align*}
\end{proof}

We have $\Delta \neq 0$ and $\text{deg}(\Delta) \leq 3D + 3 \mathcal{M} \leq 4D$. Lemma \ref{lem:point of low order of vanishing} and \eqref{eq:many solutions} imply there exists some solution $n_0 \in [\mathcal{N},\mathcal{N} +\lfloor \frac{N}{\log N}\rfloor) \cap [N,2N)$ such that
\begin{align*}
a := \text{ord}_{x=\alpha} \Delta(x) &\leq 4D \cdot \frac{\mathcal{M}^3(\log N)}{N} \ll_{P,s} \mathcal{M}^3 (\log N),
\end{align*}
where we have written $\alpha = n_0^{-1}$. Since $T(\alpha) \neq 0$, Lemma \ref{lem:full rank for angle bracket polys} yields integers $0\leq k_0 <k_1 < k_2 \leq a+2$ such that
\begin{align*}
\text{det}
\begin{pmatrix}
P_0^{\langle k_0 \rangle}(\alpha) & P_1^{\langle k_0 \rangle}(\alpha)& P_2^{\langle k_0 \rangle}(\alpha) \\
P_0^{\langle k_1 \rangle}(\alpha) & P_1^{\langle k_1 \rangle}(\alpha) & P_2^{\langle k_1 \rangle} (\alpha)\\
P_0^{\langle k_2 \rangle}(\alpha) & P_1^{\langle k_2 \rangle}(\alpha) & P_2^{\langle k_2 \rangle}(\alpha)
\end{pmatrix} \neq 0.
\end{align*}
We therefore set 
\begin{align*}
p_{i,j} := P_{i}^{\langle k_j \rangle}(\alpha) \in \mathbb{Q}.
\end{align*}
If we define $Z := r^{a+2} n_0^{D+(a+2)\mathcal{M}}$ then $Zp_{i,j} \in \mathbb{Z}$ for $0\leq i,j\leq 2$ by Lemma \ref{lem:rationals coming from Pade polynomials}. We have $Z\leq (2r\mathcal{N})^{\mathcal{M}^5} (2\mathcal{N})^D$ and
\begin{align*}
|p_{i,j}| &\leq (4r\mathcal{N})^{\mathcal{M}^4} 10^{\mathcal{M}\epsilon_0^{-1}}\left\{(4r^4\mathcal{M})^{(2-\epsilon_0)(3-\epsilon_0)\epsilon_0^{-1}}\right\}^D,
\end{align*}
also by Lemma \ref{lem:rationals coming from Pade polynomials}.

By the definition of $R^{\langle k \rangle}$ and Lemma \ref{lem:initial pade polynomials} we have
\begin{align*}
\sum_{i=0}^2 p_{i,j}\omega_i(\alpha) &= R^{\langle k_j \rangle} (\alpha) = T(\alpha)^{k_j} \sum_{v\geq \mathcal{O}} r_v \frac{1}{k_j!} \frac{d^{k_j}}{dx^{k_j}}(x^v)\Big|_{x=\alpha} = T(\alpha)^{k_j} \sum_{v\geq \mathcal{O}} r_v {v \choose {k_j}} \alpha^{v-k_j},
\end{align*}
and therefore
\begin{align*}
\left|\sum_{i=0}^2 p_{i,j}\omega_i(\alpha) \right| &\leq (2r\mathcal{N})^{a+2}\sum_{v\geq \mathcal{O}} |r_v| (2\alpha)^v \\ 
&\leq (2r\mathcal{N})^{a+2} 4^{\mathcal{M}\epsilon_0^{-1}}(4r^4 \mathcal{M})^{(2-\epsilon_0)(3-\epsilon_0)\epsilon_0^{-1} D} \sum_{v\geq \mathcal{O}} (4\mathcal{M}\alpha)^v \\
&\leq \frac{1}{1 - \frac{4\mathcal{M}}{\mathcal{N}}} (2r\mathcal{N})^{a+2} 4^{\mathcal{M}\epsilon_0^{-1}}(4r^4 \mathcal{M})^{(2-\epsilon_0)(3-\epsilon_0)\epsilon_0^{-1} D} \left(\frac{4\mathcal{M}}{\mathcal{N}}\right)^{\mathcal{O}} \\
&\leq (2r\mathcal{N})^{a+3}4^{\mathcal{M}\epsilon_0^{-1}} \left(\frac{(16r^4\mathcal{M})^{(2-\epsilon_0)(3-\epsilon_0)\epsilon_0^{-1} + 3-\epsilon_0}}{\mathcal{N}^{\, 3-\epsilon_0}} \right)^D \\
&\leq (2r\mathcal{N})^{2\mathcal{M}^4} 4^{\mathcal{M}\epsilon_0^{-1}} \left(\frac{{\mathcal{N}}^{\, 3-\epsilon_0}}{(16r^4 \mathcal{M})^{2(3-\epsilon_0)\epsilon_0^{-1}}} \right)^{-D}.
\end{align*}
This completes the proof of Proposition \ref{prop:existence of good polynomials}.

\section{Possible extensions and investigations}\label{sec:future work}

We close the paper by offering additional questions one might study.

\begin{enumerate}
\item The most obvious question is whether the exponent $\frac{33}{34}$ of Theorem \ref{thm:main thm} can be reduced. One possibility is to determine whether the polynomials $P_i$ of Lemma \ref{lem:initial pade polynomials} can be made explicit, as can be done in some situations when the functions $\omega_i(x)$ are replaced by binomial functions $(1+x)^\alpha$ \cite{Ric1993, Ben2001}. Another possibility is to make an in-depth study of the coefficients of the $\omega_i$ functions and attempt to gain from more advanced forms of Siegel's lemma \cite{BV1983}.

\item The upper bound of Theorem \ref{thm:main thm} depends on the polynomial $P$. It would be desirable to prove an upper bound of the form $CN^{1-\delta}$ where the constant $C$ depends only on the degree of the polynomial.

\item It is an easy exercise to show that, for any degree $r\geq 2$, there are infinitely many integer polynomials of degree $r$ representing $r+1$ factorial values. It would be interesting to know whether there are polynomials of degree $r$ representing $\geq r+2$ factorial values. This is a generalization of a question of Ulas \cite[Question 2.6]{Ulas2012}.

One conjectures that for any $r\geq 2$ there exists a positive constant $C_r$ such that, for any degree $r$ polynomial $P$, the equation $n! = P(x)$ has $\leq C_r$ solutions. Such a result, if true, must be very deep, since it does not seem to follow from the ABC conjecture. However, one might ask whether the result follows from some combination of standard conjectures in arithmetic geometry (ABC, Bombieri-Lang, Vojta, etc).

\item It would be interesting to obtain bounds, along the lines of Theorem \ref{thm:main thm}, for the number of solutions to the equation $H_n = P(x)$, where $H_n$ is a ``highly divisible'' sequence as in \cite{BH2006}. For example, one might consider $H_n = \text{lcm}(1,2,\ldots,n)$, or $H_n =p_1p_2\cdots p_n$, the product of the first $n$ primes. In both of these cases it seems difficult to obtain a power saving bound as in Theorem \ref{thm:main thm}. However, one should be able to obtain some effective saving over the trivial bound by adapting our techniques.

\item Let $k\geq 2$ be a positive integer, and let $\delta>0$ be a small positive constant. The ABC conjecture implies there are only finitely many solutions (depending on $k$ and $\delta$, with $n$ and $x$ coprime) to $n! = x^k + O(x^{k-1-\delta})$. On the other hand, an easy greedy argument gives infinitely many solutions to $|n! - x^k| \ll_k x^{k-1}$.

It is possible to adapt our method to obtain a bound of the form $\ll_{k,\delta} N^{1 - \varepsilon(\delta)}$ for the number of solutions to $n! = x^k + O(x^{k-1-\delta})$, where $\varepsilon(\delta) \rightarrow 0$ as $\delta \rightarrow 0$. One works with $n\geq n_0(\delta)$ solutions rather than three solutions as in Lemma \ref{lem:solutions imply simul approx}, the key point being that if $\mathcal{M}$ is a very small power of $N$ then the analogous quantity to $\chi$ in the proof of Proposition \ref{prop:not many close solutions} satisfies $\chi \approx \frac{1}{n}$. We leave the details to the interested reader.
\end{enumerate}

\section*{Acknowledgements}
We thank Daniel Berend for making us aware of the reference \cite{Sau2020}, and Florian Luca for drawing our attention to some typos in an earlier version of the paper.

\bibliographystyle{amsalpha}

\end{document}